\title{On the sizes of the maximal prime power divisors of factorials}
\authors{Dan Levy}
\abstract{%
    Let $p$ be any prime, and $p^{\nu_p(n!)}$ the maximal power of $p$ dividing $n!$. We prove that there exists a positive integer $n_0$, which depends only on $p$, such that $q^{\nu_q(n!)} < p^{\nu_p(n!)}$ for all $n \ge n_0$ and all primes $q > p$. For twin primes $p$ and $q=p+2$, we prove that the minimal $n_0$ satisfying $q^{\nu_q(n!)} < p^{\nu_p(n!)}$ for all $n \ge n_0$
    is given by $n_0 = \frac{p^{2}+p}{2}$.
    }
\keywords{Factorial; Maximal prime power factors of $n!$}
\begin{document}

\section*{Introduction}

Finding the prime factorization of an arbitrary integer is believed to be a
difficult algorithmic problem, although its precise complexity is not known.
However, for the special case of factorials, $n!:=n\cdot \left( n-1\right)
\cdots 2\cdot 1$, $n\geq 2$, we have the following complete and
elegant solution:%
\begin{equation}
n!=\prod\limits_{p\leq n}p^{\nu _{p}\left( n!\right) },
\label{Eq_n!_prime_factorization}
\end{equation}%
where the product is over all primes $p$ between $2$ and $n$ and the
multiplicity of $p$, denoted $\nu _{p}\left( n!\right) $, is given by the
Legendre formula \cite[Theorem (6-9)]{BurtonDavidBook}:%
\begin{equation}
\nu _{p}\left( n!\right) =\sum\limits_{j=1}^{\infty }\left\lfloor \frac{n}{%
p^{j}}\right\rfloor =\sum\limits_{j=1}^{L}\left\lfloor \frac{n}{p^{j}}%
\right\rfloor \text{; }L:=\left\lfloor \log _{p}n\right\rfloor .
\label{Legendre_formula}
\end{equation}%
An alternative form of this formula, which will be our main working tool, is
(see \cite[Exercise~7 in Problems for Section~6.3]{BurtonDavidBook}):
\begin{equation}
\nu _{p}\left( n!\right) =\frac{n-s_{p}\left( n\right) }{p-1}\text{,}
\label{Legendre_formula_alternative}
\end{equation}%
where $s_{p}\left( n\right) $ denotes the sum of the base $p$ digits of $n$
(see first paragraph of Section \ref{Subsect_twin_primes_proof}).

Here we study the relative sizes of the maximal prime power divisors $p^{\nu
_{p}\left( n!\right) }$ of $n!$. We will prove:

\begin{theorem}
\label{Th_p<q}Let $p$ be any prime. Then there exists some $n_{0}\left(
p\right) \in \mathbb{N}$ such that 
\begin{equation*}
q^{\nu _{q}\left( n!\right) }<p^{\nu _{p}\left( n!\right) }\text{; }\forall
n\geq n_{0}\left( p\right) \text{ and for all primes }q>p.
\end{equation*}
\end{theorem}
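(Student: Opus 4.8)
The plan is to take logarithms and compare $\nu_p(n!)\ln p$ with $\nu_q(n!)\ln q$. Using the alternative Legendre formula \eqref{Legendre_formula_alternative}, write $\nu_p(n!)\ln p = g(p)\,\bigl(n-s_p(n)\bigr)$, where $g(x):=\frac{\ln x}{x-1}$ for $x>1$. The one structural fact I need is that $g$ is strictly decreasing on $(1,\infty)$: since $g'(x)=\frac{h(x)}{(x-1)^2}$ with $h(x):=1-\frac1x-\ln x$, and $h(1)=0$ while $h'(x)=\frac{1-x}{x^2}<0$ for $x>1$, we get $h<0$ and hence $g'<0$ on $(1,\infty)$. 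Let $p'$ denote the smallest prime larger than $p$; then every prime $q>p$ satisfies $g(q)\le g(p')<g(p)$, so the extremal competitor is $q=p'$, and a single threshold $n_0(p)$ will handle all $q$ at once.

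First I would dispose of the case $q>n$: there $\nu_q(n!)=0$, so $q^{\nu_q(n!)}=1<p\le p^{\nu_p(n!)}$ provided $n\ge p$. For the remaining case $p<q\le n$, I bound the two sides separately. On the $q$-side, $\nu_q(n!)\ln q=g(q)\bigl(n-s_q(n)\bigr)\le n\,g(q)\le n\,g(p')$, using $s_q(n)\ge 0$ and the monotonicity of $g$. On the $p$-side, $n$ has at most $\log_p n+1$ base-$p$ digits, each at most $p-1$, so $s_p(n)\le (p-1)(\log_p n+1)$ and therefore
\[
\nu_p(n!)\ln p=g(p)\bigl(n-s_p(n)\bigr)\ge n\,g(p)-\ln p\,(\log_p n+1)=n\,g(p)-\ln n-\ln p .
\]
Hence it suffices to pick $n_0(p)$ so that $n\,\delta_p>\ln n+\ln p$ holds for all $n\ge n_0(p)$, where $\delta_p:=g(p)-g(p')>0$; such $n_0(p)$ exists because $\ln n+\ln p=o(n)$, and one could even write it down explicitly in terms of $p$ and $\delta_p$. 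Taking $n_0(p)$ to be the larger of this value and $p$ completes the argument.

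I do not anticipate a genuine obstacle. The only ingredient with any content is the monotonicity of $g$, a one-line calculus check, and the uniformity over the infinite family $\{q : q>p\ \text{prime}\}$ costs nothing once one observes that $q=p'$ is the worst case. The single point that deserves care is that the digit-sum error term must be \emph{retained} on the $p^{\nu_p(n!)}$ side — discarding it would push the inequality in the wrong direction — but the crude estimate $s_p(n)=O(\log n)$, with its explicit constant, is amply small compared with the fixed positive gap $\delta_p$.
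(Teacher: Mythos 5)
Your proof is correct and follows the same strategy as the paper's: reduce via the Legendre identity to a comparison of $\nu_p(n!)\ln p$ with $\nu_q(n!)\ln q$, identify $q=p'$ as the extremal competitor through monotonicity of the relevant ratio function, bound the digit sums, and observe that a fixed positive linear gap eventually dominates the logarithmic error term. Your $g(x)=\frac{\ln x}{x-1}$ equals $(\ln p)\bigl(\frac{1}{p-1}-h_p(x)\bigr)$ in the paper's notation (Lemma \ref{Lem_h_p(x)}), so the two monotonicity statements are literally equivalent; the one cosmetic improvement in your write-up is that proving $g$ decreasing on all of $(1,\infty)$ via the boundary value $h(1)=0$ avoids the small case split on $p=2$ versus $p\ge 3$ used in the paper's proof of that lemma.
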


\begin{corollary}
\label{Coro_p_1<...<p_m}Let $m$ be any positive integer and let $%
p_{1}<p_{2}<\cdots <p_{m}$ be the first $m$ prime
numbers ordered increasingly, so that $p_1 = 2$, $p_2 = 3$, $p_3 = 5$, and so on. Then there exists some $n_{0}\left( m\right)
\in \mathbb{N}$ such that 
\begin{equation*}
2^{\nu _{2}\left( n!\right) }>3^{\nu _{3}\left( n!\right) }>5^{\nu
_{5}\left( n!\right) }>\cdots >p_{m}^{\nu _{p_{m}}\left( n!\right) }>q^{\nu
_{q}\left( n!\right) }\text{; }\forall n\geq n_{0}\left( m\right)
\end{equation*}%
for any prime $q$ which is larger than $p_{m}$.
\end{corollary}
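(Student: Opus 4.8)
The plan is to deduce the corollary directly from Theorem \ref{Th_p<q} by a finite-maximum argument; no new analytic input is needed. First I would observe that the desired chain
\[
2^{\nu_{2}(n!)} > 3^{\nu_{3}(n!)} > \cdots > p_{m}^{\nu_{p_{m}}(n!)} > q^{\nu_{q}(n!)}
\]
decomposes into the $m-1$ consecutive inequalities $p_{i}^{\nu_{p_{i}}(n!)} > p_{i+1}^{\nu_{p_{i+1}}(n!)}$ for $1 \le i \le m-1$, together with the tail inequality $p_{m}^{\nu_{p_{m}}(n!)} > q^{\nu_{q}(n!)}$ to hold for every prime $q > p_{m}$. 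Since strict inequality is transitive, it suffices to secure all of these simultaneously for $n$ large.

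Next, for each $i \in \{1,\dots,m\}$ apply Theorem \ref{Th_p<q} with $p = p_{i}$ to obtain a threshold $n_{0}(p_{i}) \in \mathbb{N}$, depending only on $p_{i}$, such that $q^{\nu_{q}(n!)} < p_{i}^{\nu_{p_{i}}(n!)}$ for all $n \ge n_{0}(p_{i})$ and all primes $q > p_{i}$. Taking $q = p_{i+1}$ (which is prime and exceeds $p_{i}$) yields the $i$-th consecutive inequality whenever $n \ge n_{0}(p_{i})$, and the case $i = m$ gives the tail inequality for every prime $q > p_{m}$ whenever $n \ge n_{0}(p_{m})$.

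Finally I would set $n_{0}(m) := \max\{\, n_{0}(p_{1}), n_{0}(p_{2}), \dots, n_{0}(p_{m}) \,\}$, which is a well-defined positive integer as the maximum of finitely many positive integers. For every $n \ge n_{0}(m)$ all $m$ of the inequalities above hold at once, and chaining them produces the asserted ordering, completing the proof of the corollary.

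There is essentially no hard step: the entire content sits in Theorem \ref{Th_p<q}, and the corollary merely repackages its conclusion for finitely many primes simultaneously. The one point that must be checked is that the threshold delivered by the theorem depends on $p$ alone and not on $q$ or $n$, so that a maximum over the finite set $\{p_{1},\dots,p_{m}\}$ is legitimate; this is exactly what the statement of Theorem \ref{Th_p<q} guarantees.
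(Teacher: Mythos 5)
Your proof is correct and follows exactly the paper's own argument: apply Theorem \ref{Th_p<q} to each of $p_{1},\dots,p_{m}$, take $n_{0}(m)=\max\{n_{0}(p_{1}),\dots,n_{0}(p_{m})\}$, and chain the resulting strict inequalities. The paper simply states this in one line; you spell out the (straightforward) transitivity and well-definedness details, which is harmless.
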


\begin{proof}
This follows from Theorem \ref{Th_p<q} by taking $n_{0}\left( m\right) =\max
\left\{ n_{0}\left( p_{1}\right) ,\ldots ,n_{0}\left( p_{m}\right) \right\} $%
.
\end{proof}

\begin{corollary}
\label{Coro_2_domination}Let $q$ be any odd prime, and $n\geq 2$ any
integer. Then $2^{\nu _{2}\left( n!\right) }>q^{\nu _{q}\left( n!\right) }$
with the single exception $q=n=3$.
\end{corollary}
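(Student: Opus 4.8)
The plan is to bound $q^{\nu _q(n!)}$ from above by a single power of $3$ that does not depend on $q$, and then to compare that bound with $2^{\nu _2(n!)}$. First, if $q>n$ then $\nu _q(n!)=0$ by \eqref{Legendre_formula}, so $q^{\nu _q(n!)}=1<2\le 2^{\nu _2(n!)}$ (using $\nu _2(n!)\ge 1$ for every $n\ge 2$) and there is nothing to prove. Hence I may assume $q\le n$, which forces $n\ge q\ge 3$, and the goal becomes $2^{\nu _2(n!)}>q^{\nu _q(n!)}$ for all such $n$ except $n=3$.

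The central estimate is $q^{\nu _q(n!)}\le 3^{(n-1)/2}$ for every odd prime $q$. By \eqref{Legendre_formula_alternative} and $s_q(n)\ge 1$ we have $\nu _q(n!)=\dfrac{n-s_q(n)}{q-1}\le \dfrac{n-1}{q-1}$, so $q^{\nu _q(n!)}\le \bigl(q^{1/(q-1)}\bigr)^{\,n-1}$. Since $x\mapsto \dfrac{\log x}{x-1}$ is decreasing on $[2,\infty )$, one gets $q^{1/(q-1)}\le 3^{1/2}$ for all $q\ge 3$ — equivalently $q^{2}\le 3^{\,q-1}$, with equality only at $q=3$ — and therefore $q^{\nu _q(n!)}\le 3^{(n-1)/2}$. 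Consequently it suffices to prove
\[
2^{\nu _2(n!)}>3^{(n-1)/2}
\]
for every $n\ge 2$, and then to check by hand the finitely many $n$ where this inequality happens to fail.

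By \eqref{Legendre_formula_alternative}, $\nu _2(n!)=n-s_2(n)$, so the displayed inequality is equivalent to $2\bigl(n-s_2(n)\bigr)>(n-1)\log _2 3$, i.e.\ to $\dfrac{s_2(n)-1}{n-1}<1-\tfrac12\log _2 3$. Using $\tfrac12\log _2 3<0.793$ together with $s_2(n)\le 1+\lfloor \log _2 n\rfloor $ (and, in the range where that bound is too weak, the observation that a number with $s_2(n)$ binary digits equal to $1$ is at least $2^{s_2(n)}-1$), one verifies the inequality for all large $n$, and a direct computation for the remaining small $n$ shows that $2^{\nu _2(n!)}>3^{(n-1)/2}$ holds for every $n\ge 2$ except $n\in \{3,5,7,15\}$. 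It then only remains to settle $n\in \{3,5,7,15\}$ directly from \eqref{Legendre_formula}: with $2^{\nu _2(5!)}=2^{3}$, $2^{\nu _2(7!)}=2^{4}$ and $2^{\nu _2(15!)}=2^{11}$ one checks $2^{\nu _2(n!)}>q^{\nu _q(n!)}$ for every odd prime $q\le n$ when $n\in \{5,7,15\}$, whereas for $n=3$ the only odd prime $q\le 3$ is $q=3$ and $2^{\nu _2(3!)}=2<3=3^{\nu _3(3!)}$ — exactly the claimed exception. (In passing, this pins down the $p=2$ case of Theorem (\ref{Th_p<q}) with $n_0(2)=2$.)

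I expect the only genuine difficulty to be the exhaustiveness of the finite check. The reduction to the $q$-free bound $3^{(n-1)/2}$ is wasteful precisely when $n$ is small, so one must make sure — by applying the bound on $s_2(n)$ quantitatively, not merely asymptotically — that the range inspected by hand really contains every $n$ for which $2^{\nu _2(n!)}>3^{(n-1)/2}$ can fail; everything else in the argument is routine.
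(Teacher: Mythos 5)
Your approach matches the paper's in all essentials: both reduce the comparison to the extremal prime $q=3$ through the monotonicity of $x\mapsto(\ln x)/(x-1)$ (the content of the paper's Lemma \ref{Lem_h_p(x)}), and your $q$-free bound $q^{\nu_q(n!)}\le 3^{(n-1)/2}$ together with $s_2(n)\le 1+\lfloor\log_2 n\rfloor$ rearranges precisely to the paper's condition (\ref{Eq_7}) at $p=2$, namely $\lfloor\log_2 n\rfloor<(n-1)h_2(3)$ with $h_2(3)=1-\tfrac{1}{2}\log_2 3$. What you leave open --- and explicitly acknowledge leaving open --- is the quantitative step that caps the finite check: one must exhibit a concrete $N$ past which the digit-sum bound dominates before listing the failures of $2^{\nu_2(n!)}>3^{(n-1)/2}$ as exactly $\{3,5,7,15\}$. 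The paper supplies this with $h_2(3)>\tfrac{1}{5}$, a direct verification of $\lfloor\log_2 n\rfloor\le(n-1)/5$ for $21\le n\le 31$, and a step-versus-linear comparison for $n\ge 32$, giving $N=21$; until you do something equivalent, your exception set is asserted rather than proved. One further correction: the parenthetical claim $n_0(2)=2$ contradicts the very exception you identify at $n=3$, $q=3$; the conclusion of Theorem (\ref{Th_p<q}) with $p=2$ holds only from $n=4$ onward, and the paper's value $n_0(2)=21$ refers to the threshold of the auxiliary logarithmic inequality, not of the theorem's conclusion.
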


Shortly after uploading the first version (arXiv:submit/7138707) of the
present paper, I have learned about a paper by Mehdi Hassani \cite%
{HassaniMhedi2005}, with results similar to Theorem \ref{Th_p<q} and
Corollary \ref{Coro_2_domination}.

\begin{theorem}[see {\cite[Theorem~5.2]{HassaniMhedi2005}}]
Suppose $p$ and $q$ are primes such that $p<q$. Then for sufficiently large values of $n$, we have
\begin{equation*}
p^{\nu _{p}\left( n!\right) }>q^{\nu _{q}\left( n!\right) }\text{. }
\end{equation*}
\end{theorem}

\begin{corollary}[see {\cite[Corollary~5.3]{HassaniMhedi2005}}]
For $n=2$ and $n\geq 4$ we have 
\begin{equation*}
2^{\nu _{2}\left( n!\right) }>3^{\nu _{3}\left( n!\right) }\text{. }
\end{equation*}
\end{corollary}

Hassani mentions that the claim of the last corollary was already proved in 
\cite{Balacenoiu1997}.

It is clear that Theorem \ref{Th_p<q} implies Hassani's Theorem 5.2.
Furthermore, the formulation of the conclusion of Theorem \ref{Th_p<q}
refines the \textquotedblleft sufficiently large $n$\textquotedblright\
condition of Hassani's statement. A priori, deciding whether $n$ is
sufficiently large may depend on both $p$ and $q$, but the proof of Theorem %
\ref{Th_p<q} reveals that it can be made to depend on $p$ only. Fix any
prime $p$ and denote by $p_{\text{succ}}$ its prime successor, that is, $p_{%
\text{succ}}$ is the smallest prime which is strictly larger than $p$. The
proof shows that any positive integer solution $n$ of the inequality%
\begin{equation*}
\left\lfloor \log _{p}n\right\rfloor <\left( n-1\right) h_{p}\left( p_{\text{%
succ}}\right) -\frac{p-2}{p-1}\text{,}
\end{equation*}%
where $h_{p}\left( x\right) :=\frac{1}{p-1}-\frac{\log _{p}x}{x-1}$,
satisfies $q^{\nu _{q}\left( n!\right) }<p^{\nu _{p}\left( n!\right) }$ for
any prime $q>p$. Moreover, there exists $n_{0}\in \mathbb{N}$ such that the
above inequality holds true for all $n\geq n_{0}$. Now, since $p_{\text{succ}%
}$ is uniquely determined by $p$, the minimal such $n_{0}$ is a function of $%
p$. Denoting this minimal value by $n_{0}\left( p\right) $, we get that $%
q^{\nu _{q}\left( n!\right) }<p^{\nu _{p}\left( n!\right) }$ holds true for
all $n\geq n_{0}\left( p\right) $ and all $q>p$. This difference is
significant as is already evident when one compares between Hassani's
Corollary 5.3 and Corollary \ref{Coro_2_domination}, and it gains even more
strength when considering the second main result of the current paper, to
which we now turn.

The existence of $n_{0}\left( p\right) $ implies the existence of a minimal
positive integer $$n_{\min }\left( p\right) \leq n_{0}\left( p\right) $$ such
that $$p_{\text{succ}}^{\nu _{p_{\text{succ}}}\left( n!\right) }<p^{\nu
_{p}\left( n!\right) }$$ for all $n\geq $ $n_{\min }\left( p\right) $. While
it is evident that $n_{\min }\left( p\right) \leq n_{0}\left( p\right) $,
this bound need not be tight. For instance, if $p=2$, the proof of Corollary %
\ref{Coro_2_domination} gives $n_{0}\left( 2\right) =21$, while $n_{\min
}\left( 2\right) =4$. The second result of the paper gives the exact value
of $n_{\min }\left( p\right) $ in the special case $p_{\text{succ}}=p+2$,
(equivalently, $p$ and $p_{\text{succ}}$ are twin primes).

\begin{theorem}
\label{Th_twin_primes} Let $p$ and $q=p+2$ be prime twins. Then%
\begin{equation*}
n_{\min }\left( p\right) =\frac{p^{2}+p}{2}.
\end{equation*}
\end{theorem}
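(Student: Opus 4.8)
The plan is to work with the alternative Legendre formula \eqref{Legendre_formula_alternative}, which for twin primes $p$ and $q=p+2$ gives $\nu_p(n!)=(n-s_p(n))/(p-1)$ and $\nu_q(n!)=(n-s_q(n))/(p+1)$. Taking logarithms base $p$, the inequality $q^{\nu_q(n!)}<p^{\nu_p(n!)}$ is equivalent to
\[
\frac{n-s_q(n)}{p+1}\,\log_p q \;<\; \frac{n-s_p(n)}{p-1}\,.
\]
First I would rearrange this into the form $n\bigl(\tfrac{1}{p-1}-\tfrac{\log_p q}{p+1}\bigr) > \tfrac{s_p(n)}{p-1}-\tfrac{s_q(n)\log_p q}{p+1}$. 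The coefficient of $n$ on the left is a positive constant $c_p$ (positivity follows since $\log_p q<\log_p p^2=2<\tfrac{p+1}{p-1}\cdot\tfrac{p-1}{?}$—more carefully, $\tfrac{1}{p-1}-\tfrac{\log_p q}{p+1}>0 \iff \log_p q<\tfrac{p+1}{p-1}$, which holds for all $p\ge3$ since $\tfrac{p+1}{p-1}>1$ and $q<p^2$ gives $\log_p q<2$, and a short estimate closes the gap for small $p$). So for large $n$ the inequality holds, consistent with Theorem~\ref{Th_p<q}; the whole problem is to pin down the \emph{exact} threshold.

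The key step is to analyze the digit-sum functions near $n=\tfrac{p^2+p}{2}$. Note $\tfrac{p^2+p}{2}$ in base $p$: since $\tfrac{p^2+p}{2}=\tfrac{p-1}{2}\cdot p+\tfrac{p(p+1)}{2}$—hmm, better: $\tfrac{p^2+p}{2}=\tfrac{p+1}{2}\cdot p$, so its base-$p$ representation is the two-digit string $\bigl(\tfrac{p+1}{2}\bigr)\bigl(\tfrac{p+1}{2}\bigr)\cdot$? No: $\tfrac{p+1}{2}\cdot p$ has units digit $0$ and $p$-digit $\tfrac{p+1}{2}$, so $s_p\bigl(\tfrac{p^2+p}{2}\bigr)=\tfrac{p+1}{2}$. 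I would compute $s_q$ at this value too, then verify the displayed inequality holds at $n_0:=\tfrac{p^2+p}{2}$ by direct substitution, reducing it to an arithmetic identity in $p$. That establishes $n_{\min}(p)\le \tfrac{p^2+p}{2}$.

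For the reverse inequality $n_{\min}(p)\ge\tfrac{p^2+p}{2}$, I would exhibit a value $n=n_0-1=\tfrac{p^2+p}{2}-1$ (or possibly several sporadic values up to that point) at which $q^{\nu_q(n!)}\ge p^{\nu_p(n!)}$, again by direct digit-sum computation. Here $\tfrac{p^2+p}{2}-1=\tfrac{p-1}{2}\cdot p+(p-1)$ has base-$p$ digits $\tfrac{p-1}{2}$ and $p-1$, so $s_p=\tfrac{3p-3}{2}$, which is much larger, making $\nu_p$ smaller and flipping the inequality. The \textbf{main obstacle} I anticipate is not the endpoint check but ruling out \emph{all} $n<n_0$: one must show the bad inequality recurs (or at least fails to be beaten) infinitely often—or rather, that the inequality $q^{\nu_q}<p^{\nu_p}$ genuinely fails for \emph{some} $n$ in every "tail up to $n_0$", equivalently that it does not hold for all $n\ge n_0-1$. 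The cleanest route is to show directly that at $n=n_0-1$ the inequality fails, so $n_{\min}(p)>n_0-1$; combined with the forward bound this forces $n_{\min}(p)=n_0$. Thus the real work is (i) the positivity/monotonicity bookkeeping to guarantee that once the inequality holds at $n_0$ it holds for all larger $n$ (which should follow from the linear-growth term $c_p n$ dominating the bounded oscillation of the digit sums, with the bound $s_p(n)\le (p-1)(\lfloor\log_p n\rfloor+1)$), and (ii) the two explicit evaluations at $n_0$ and $n_0-1$. I expect (i) to require the most care, since one needs a clean enough bound on $s_p(n)-\tfrac{(p-1)}{p+1}s_q(n)\log_p q$ to beat $c_p$ for every $n\ge n_0$, not just asymptotically.
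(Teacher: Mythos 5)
Your endpoint computations are correct and match the paper's: at $n_0-1 = \tfrac{p^2+p}{2}-1$ one finds $\nu_q = \nu_p = \tfrac{p-1}{2}$, so the inequality fails since $q>p$; at $n_0$ one has $s_p(n_0) = s_q(n_0) = \tfrac{p+1}{2}$, giving the ratio $r(n_0,p):=\tfrac{n_0-s_p(n_0)}{n_0-s_q(n_0)}=1$, which exceeds the threshold $\tfrac{\ln q}{\ln p}\cdot\tfrac{p-1}{p+1}<1$. These are the paper's Lemmas \ref{Lem_nu_q_(k!)=nu_p_(k!)} and \ref{Lem_r(k+1,p)=1}.

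The genuine gap is in step (i). You propose to show that for all $n\ge n_0$ the inequality holds by invoking the bound $s_p(n)\le(p-1)(\lfloor\log_p n\rfloor+1)$ and letting the linear term dominate. That bound is far too crude near $n_0$. It gives $r(n,p)\ge 1-\tfrac{(p-1)(1+\log_p n)}{n}$, and at $n=n_0=\tfrac{p^2+p}{2}$ (a two-digit number, $\lfloor\log_p n\rfloor=1$) this lower bound is $1-\tfrac{4(p-1)}{p(p+1)}$. For $p=11$ this is $\approx 0.697$, while the threshold $\tfrac{\ln 13}{\ln 11}\cdot\tfrac{10}{12}\approx 0.891$. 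So the crude bound does not even certify the inequality at $n_0$ itself, let alone for all larger $n$. The deeper issue is that $r(n,p)$ is \emph{not} monotone on $[n_0,\infty)$: it starts at $1$, dips to a global minimum at $m=p^2-4$ (for $p\ge 11$; $m=(p+2)^2$ for $p\in\{3,5\}$) with value $\tfrac{(p-1)^2}{(p-2)(p+1)}$ (resp.\ $\tfrac{5}{6}$), and only then recovers toward $1$. So "once it holds at $n_0$ it holds for all larger $n$" is false as a monotonicity statement; what is true, and what must be proved, is that the \emph{minimum} still clears the threshold. The paper establishes this through a base-$p$ versus base-$q$ digit-by-digit case analysis over the window $n_0\le n\le 4p^2$ (Theorem \ref{Th_r(n,p)_global_minimum} and its supporting propositions), using the crude bound only to dispose of the tail $n\ge 4p^2$ (or $p^4,p^3$ for $p=3,5$). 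Your proposal would need to be replaced by something of this nature; as written it has no mechanism for locating or estimating the dip in $r(n,p)$.
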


Section \ref{Sect_Proofs} is devoted to the proofs of Theorem \ref{Th_p<q},
Corollary \ref{Coro_2_domination} and Theorem \ref{Th_twin_primes}. The
arguments combine the use of (\ref{Legendre_formula_alternative}), some
guess work based on numerical examples that were studied using the computer
algebra system GAP \cite{GAP2025} (see Section \ref{Sect_GAP_Code} for a
sample code), and standard manipulations of inequalities that also rely on
elementary results from the calculus of real one variable functions.

It seems appropriate to conclude the introduction with the following two
remarks. The initial motivation behind the question addressed by Theorem \ref%
{Th_p<q} arose in the context of finite group theory, where the maximal
prime power divisors of $n!$ are the orders of the Sylow subgroups of the
finite Symmetric groups $S_{n}$. Here the author is happy to embrace the
following sentence, quoted from \cite{DiaconisEtAl2025}: \textquotedblleft
Moreover, any simple question about $S_{n}$ is worth
studying(!).\textquotedblright\ The second remark concerns the simple
looking answer to the question addressed by Theorem \ref{Th_twin_primes}.
One wonders if this result can be generalized to other primes (with $p_{%
\text{succ}}-p>2$), and if so, can $n_{\min }\left( p\right) $ be written in
polynomial form whose coefficients are parametrized by $p_{\text{succ}}-p$?

\section{Proofs\label{Sect_Proofs}}

\subsection{Proof of Theorem~\ref{Th_p<q}}

\begin{lemma}
\label{Lem_from_powers_to_logs}Let $p<q\leq n$, where $p$ and $q$ are
primes and $n$ is an integer. Then 
\begin{equation*}
q^{\nu _{q}\left( n!\right) }<p^{\nu _{p}\left( n!\right)
}\Longleftrightarrow \left( \log _{p}q\right) \cdot \frac{n-s_{q}\left(
n\right) }{q-1}<\frac{n-s_{p}\left( n\right) }{p-1}.
\end{equation*}
\end{lemma}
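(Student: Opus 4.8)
The plan is to reduce the asserted equivalence to one application of a logarithm together with the alternative form (\ref{Legendre_formula_alternative}) of Legendre's formula. Since $p$ is prime we have $p\geq 2>1$, so $t\mapsto \log_{p}t$ is a strictly increasing bijection of $(0,\infty)$ onto $\mathbb{R}$. Both $q^{\nu_{q}(n!)}$ and $p^{\nu_{p}(n!)}$ are positive integers, so
\[
q^{\nu_{q}(n!)}<p^{\nu_{p}(n!)}\iff \log_{p}\bigl(q^{\nu_{q}(n!)}\bigr)<\log_{p}\bigl(p^{\nu_{p}(n!)}\bigr),
\]
and this is a genuine two-way implication precisely because $\log_{p}$ is monotone. (Note that the hypotheses $p<q\leq n$ are not actually used in deriving the equivalence itself; they only guarantee, in the intended application, that the two exponents are positive and that the primes are distinct.)

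Next I would expand the two sides using $\log_{p}(q^{a})=a\log_{p}q$ and $\log_{p}(p^{b})=b$, so that the right-hand condition becomes $(\log_{p}q)\,\nu_{q}(n!)<\nu_{p}(n!)$. Substituting $\nu_{p}(n!)=\dfrac{n-s_{p}(n)}{p-1}$ and $\nu_{q}(n!)=\dfrac{n-s_{q}(n)}{q-1}$ — both instances of (\ref{Legendre_formula_alternative}) — turns this into exactly the inequality displayed in the statement, which finishes the argument.

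There is no substantive obstacle here: the proof is a monotone change of variables followed by a substitution. The only points deserving a line of justification are that $\log_{p}$ is strictly increasing (hence that passing to logarithms loses no information), which uses $p>1$, and that the logarithm is applied only to strictly positive quantities, which holds because every maximal prime power divisor of $n!$ is a positive integer.
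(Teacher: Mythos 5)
Your proof is correct and is essentially the same as the paper's: the paper writes $q=p^{\log_{p}q}$ and compares exponents of the common base $p$, which is just a repackaging of your appeal to the strict monotonicity of $\log_{p}$, and both then substitute (\ref{Legendre_formula_alternative}). Your parenthetical remark that the hypotheses $p<q\leq n$ are not needed for the bare equivalence is a fair observation and does not conflict with the paper.
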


\begin{proof}
Using the identity $q=p^{\log _{p}q}$ we have 
\begin{align*}
q^{\nu _{q}\left( n!\right) }<p^{\nu _{p}\left( n!\right) }
&\Longleftrightarrow 
\left( p^{\log _{p}q}\right) ^{\nu _{q}\left( n!\right) }<p^{\nu _{p}\left(
n!\right) } \\
&\Longleftrightarrow
p^{\left( \log _{p}q\right) \cdot \nu _{q}\left( n!\right) }<p^{\nu
_{p}\left( n!\right) } \\
&\Longleftrightarrow
\left( \log _{p}q\right) \cdot \nu _{q}\left( n!\right) <\nu _{p}\left(
n!\right) .
\end{align*}
Substituting $\nu _{q}\left( n!\right) $ and $\nu _{p}\left( n!\right) $
from (\ref{Legendre_formula_alternative}) in the last inequality, yields the
claim of the lemma.
\end{proof}

\begin{lemma}
\label{Lem_h_p(x)}Let $p$ be a prime and set 
\begin{equation}
h_{p}\left( x\right) :=\frac{1}{p-1}-\frac{\log _{p}x}{x-1}=\frac{1}{p-1}-%
\frac{1}{\ln p}\frac{\ln x}{x-1}\text{; }\forall x\in \lbrack p,\infty )%
.  \label{Def_h_p(x)}
\end{equation}%
Then $h_{p}\left( x\right) $ is differentiable and monotonically increasing
in its domain of definition $[p,\infty )$. It follows that for any prime $%
q>p $ we have 
\begin{equation*}
h_{p}\left( q\right) \geq h_{p}\left( p_{\text{succ}}\right) >0\text{.\label%
{Inequality_h_p(q)}}
\end{equation*}
\end{lemma}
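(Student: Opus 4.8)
The plan is to reduce both assertions to the monotonicity of the single real function $g(x):=\dfrac{\ln x}{x-1}$ on $(1,\infty)$. First I would rewrite $h_{p}(x)=\frac{1}{p-1}-\frac{1}{\ln p}\,g(x)$; since the first summand is constant and $\ln p>0$ (because $p\geq 2$), differentiability of $h_{p}$ on $[p,\infty)$ is immediate once one notes that $x-1\geq p-1\geq 1>0$ there, and $h_{p}$ is monotonically increasing on $[p,\infty)$ if and only if $g$ is decreasing on $[p,\infty)$.

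To establish that $g$ is decreasing I would compute $g'(x)=\dfrac{(x-1)/x-\ln x}{(x-1)^{2}}$, so that the sign of $g'(x)$ coincides with the sign of the auxiliary function $\phi(x):=1-\frac{1}{x}-\ln x$. Then I would observe $\phi(1)=0$ and $\phi'(x)=\frac{1}{x^{2}}-\frac{1}{x}=\frac{1-x}{x^{2}}<0$ for $x>1$; hence $\phi$ is strictly decreasing on $[1,\infty)$ and therefore $\phi(x)<\phi(1)=0$ for every $x>1$. Consequently $g'(x)<0$ on all of $(1,\infty)\supseteq[p,\infty)$, which gives that $h_{p}$ is strictly increasing on $[p,\infty)$, as claimed.

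For the displayed inequality, any prime $q>p$ satisfies $q\geq p_{\text{succ}}$, so monotonicity of $h_{p}$ yields $h_{p}(q)\geq h_{p}(p_{\text{succ}})$. To get strict positivity I would evaluate at the left endpoint: $h_{p}(p)=\frac{1}{p-1}-\frac{\log_{p}p}{p-1}=0$, and since $p_{\text{succ}}>p$, strict monotonicity gives $h_{p}(p_{\text{succ}})>h_{p}(p)=0$, completing the argument.

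There is no genuine obstacle in this lemma; the only step requiring a little care is pinning down the sign of $g'$, and the auxiliary function $\phi$ together with the base-point value $\phi(1)=0$ does all of that work. The remaining bookkeeping is just keeping track that $p\geq 2$, so that $\ln p>0$ and $x-1\geq 1>0$ on the relevant interval, which is what legitimizes the division and the sign comparisons.
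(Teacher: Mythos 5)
Your proof is correct and follows essentially the same route as the paper: compute the derivative, reduce to the sign of the auxiliary function $1-\tfrac{1}{x}-\ln x$ (which is the negative of the paper's $g(x)=\ln x+\tfrac{1}{x}-1$), and then close with monotonicity and $h_p(p)=0$. The one place you genuinely streamline is the sign analysis: by anchoring at $\phi(1)=0$ and using that $\phi$ is strictly decreasing on $(1,\infty)$, you get $\phi<0$ on all of $(1,\infty)$ in one stroke, whereas the paper anchors at $x=p$ and therefore needs a small case split (checking $g(2)>0$ numerically and $g(p)>0$ via $\ln p>1$ for $p\geq 3$). That is a minor but real simplification; the rest is identical in substance.
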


\begin{proof}
The differentiability of $h_{p}\left( x\right) $ in $[p,\infty )$ is clear.
In order to prove that it is monotonically increasing, we compute its first
derivative in $[p,\infty )$%
\begin{equation*}
h_{p}^{\prime }\left( x\right) =\frac{1}{\ln p}\frac{1}{\left( x-1\right)
^{2}}\left( \ln x+\frac{1}{x}-1\right) \text{,}
\end{equation*}%
and check that $h_{p}^{\prime }\left( x\right) $ is positive in $[p,\infty )$%
. Clearly, for any $x\in \lbrack p,\infty )$ we have $h_{p}^{\prime }\left(
x\right) >0$ if and only if $$g\left( x\right) :=\ln x+\frac{1}{x}-1>0.$$ For $p=2$ we
have $$g\left( 2\right) =\ln 2+\frac{1}{2}-1\geq 0.693-\frac{1}{2}>0.$$ For 
$p\geq 3$ we have $\ln p>1$ and hence $$g\left( p\right) =\ln p+\frac{1}{p}-1>
\frac{1}{p}>0.$$ Thus $g\left( p\right) >0$ for all primes $p$. Now $$
g^{\prime }\left( x\right) =\frac{1}{x}-\frac{1}{x^{2}}$$ and $g^{\prime
}\left( x\right) >0$ for all $x\in \lbrack p,\infty )$ since $p>1$. It
follows that $g\left( x\right) $ is monotonically increasing in $[p,\infty )$, which, together with $g\left( p\right) >0$, shows that $g\left( x\right) >0$
for all $x\in \lbrack p,\infty )$. Hence $h_{p}^{\prime }\left( x\right) >0$
for all $x\in \lbrack p,\infty )$, and so $h_{p}\left( x\right) $ is
monotonically increasing in $[p,\infty )$. This implies that for any $u>p$,
the function $h_{p}\left( x\right) $ has a global minimum in $[u,\infty )$,
given by $h_{p}\left( u\right) $. The value $h_{p}\left( u\right) $ is
strictly positive since $h_{p}\left( p\right) =0$, and $h_{p}\left( x\right) 
$ is monotonically increasing in $[p,\infty )$. Choosing $u=p_{\text{succ}}$
concludes the proof.
\end{proof}

\begin{proof}[Proof of Theorem~\ref{Th_p<q}]
Let $p$ be any prime. We have to prove the existence of $n_{0}\in \mathbb{N}$%
, which may depend on $p$, such that 
\begin{equation*}
q^{\nu _{q}\left( n!\right) }<p^{\nu _{p}\left( n!\right) }\text{; }\forall
n\geq n_{0}\text{ and for all primes }q>p\text{ .}
\end{equation*}%
Since $q^{\nu _{q}\left( n!\right) }\geq 1$ we must have $n_{0}\geq p$.
Under this assumption $\nu _{p}\left( n!\right) >0$ for all $n\geq n_{0}$
and hence $q^{\nu _{q}\left( n!\right) }<p^{\nu _{p}\left( n!\right) }$ is
trivially true for all primes $q>p$ and all $n\geq n_{0}$ with $q>n$. Hence
it suffices to show that there exists $n_{0}\in \mathbb{N}$ such that $%
n_{0}>p $ and $q^{\nu _{q}\left( n!\right) }<p^{\nu _{p}\left( n!\right) }$
for all $n\geq n_{0}$ and for all primes $q$ with $p<q\leq n$.
By Lemma \ref{Lem_from_powers_to_logs} we have to prove the existence of $%
n_{0}\in \mathbb{N}$ such that \ \ \ \ 
\begin{equation}
\left( \log _{p}q\right) \cdot \frac{n-s_{q}\left( n\right) }{q-1}<\frac{%
n-s_{p}\left( n\right) }{p-1}  \label{Eq_1}
\end{equation}%
for all $n\geq n_{0}$ and for all primes $q$ such that $p<q\leq n$.
Since $n$ is positive, we have $s_{q}\left( n\right) \geq 1$. This implies 
\begin{equation}
\left( \log _{p}q\right) \cdot \frac{n-s_{q}\left( n\right) }{q-1}\leq
\left( \log _{p}q\right) \cdot \frac{n-1}{q-1}.  \label{Eq_2}
\end{equation}%
On the other hand, the number of base $p$ digits of $n$ is $\left\lfloor
\log _{p}n\right\rfloor +1$, and the largest digit of $n$ in base $p$ is $%
p-1 $. Therefore 
\begin{equation*}
s_{p}\left( n\right) \leq \left( p-1\right) \left( \left\lfloor \log
_{p}n\right\rfloor +1\right) .
\end{equation*}%
This implies 
\begin{equation}
\frac{n-\left( p-1\right) \left( \left\lfloor \log _{p}n\right\rfloor
+1\right) }{p-1}\leq \frac{n-s_{p}\left( n\right) }{p-1}.
\label{Eq_3}
\end{equation}%
Combining (\ref{Eq_2}) and (\ref{Eq_3}) it follows that any prime $q$ and
integer $n$ satisfying 
\begin{equation}
\left( \log _{p}q\right) \cdot \frac{n-1}{q-1}<\frac{n-\left( p-1\right)
\left( \left\lfloor \log _{p}n\right\rfloor +1\right) }{p-1}\text{,}
\label{Eq_4}
\end{equation}%
and $p<q\leq n$, also satisfy (\ref{Eq_1}). Hence our aim now is to prove
the existence of $n_{0}\in \mathbb{N}$ such that (\ref{Eq_4}) holds true for
all $n\geq n_{0}$ and for all primes $q$ such that $p<q\leq n$. We start
with the following equivalences:%
\begin{gather*}
\left( \log _{p}q\right) \cdot \frac{n-1}{q-1}<\frac{n-\left( p-1\right)
\left( \left\lfloor \log _{p}n\right\rfloor +1\right) }{p-1} \\
\Longleftrightarrow \\
n\frac{\left( \log _{p}q\right) }{q-1}-\frac{\left( \log _{p}q\right) }{q-1}<%
\frac{n}{p-1}-\left( \left\lfloor \log _{p}n\right\rfloor +1\right) \\
\Longleftrightarrow
\end{gather*}%
\begin{equation}
\left\lfloor \log _{p}n\right\rfloor <n\left( \frac{1}{p-1}-\frac{\left(
\log _{p}q\right) }{q-1}\right) -\left( 1-\frac{\left( \log _{p}q\right) }{%
q-1}\right) .  \label{Eq_5}
\end{equation}%
By the notation of Lemma \ref{Lem_h_p(x)} we have 
\begin{eqnarray*}
h_{p}\left( q\right) &=&\frac{1}{p-1}-\frac{\left( \log _{p}q\right) }{q-1}
\\
1-\frac{\left( \log _{p}q\right) }{q-1} &=&1-\frac{1}{p-1}+\frac{1}{p-1}-%
\frac{\left( \log _{p}q\right) }{q-1} \\
&=&\frac{p-2}{p-1}+h_{p}\left( q\right) .
\end{eqnarray*}%
Applying this to (\ref{Eq_5}) we get that (\ref{Eq_5}) is equivalent to%
\begin{equation}
\left\lfloor \log _{p}n\right\rfloor <\left( n-1\right) h_{p}\left( q\right)
-\frac{p-2}{p-1}.  \label{Eq_6}
\end{equation}%
By Lemma \ref{Lem_h_p(x)}, $h_{p}\left( q\right) \geq h_{p}\left( p_{\text{%
succ}}\right) >0$, and hence, any positive integer $n$ satisfying 
\begin{equation}
\left\lfloor \log _{p}n\right\rfloor <\left( n-1\right) h_{p}\left( p_{\text{%
succ}}\right) -\frac{p-2}{p-1}\text{,}  \label{Eq_7}
\end{equation}%
also satisfies (\ref{Eq_6}) for all primes $q$ such that $p<q\leq n$. Since $%
\left\lfloor \log _{p}n\right\rfloor \leq \log _{p}n$, any positive integer $%
n$ satisfying%
\begin{equation}
\log _{p}n<\left( n-1\right) h_{p}\left( p_{\text{succ}}\right) -\frac{p-2}{%
p-1}\text{,}  \label{Eq_(7.1)}
\end{equation}%
also satisfies (\ref{Eq_7}). Now (\ref{Eq_(7.1)}) is a special case of the
following inequality:%
\begin{equation}
\log _{p}n<n\cdot a-b.  \label{Eq_(7.2)}
\end{equation}%
where $a=h_{p}\left( p_{\text{succ}}\right) $ and $b=a+\frac{p-2}{p-1}$.
Note that $a$ and $b$ are functions of $p$, which are independent of $n$,
and that $a$ is a positive real number by Lemma (\ref{Lem_h_p(x)}). Since
the right hand side of (\ref{Eq_(7.2)}) is a linear function of $n$ with a
positive slope, it eventually dominates $\log _{p}n$ on the left-hand side
of (\ref{Eq_(7.2)}), which means that there exists $n_{0}\in \mathbb{N}$
such that Inequality \eqref{Eq_(7.2)} holds true for all $n\geq n_{0}$.
\end{proof}

\begin{proof}[Proof of Corollary~\ref{Coro_2_domination}]
By the proof of Theorem \ref{Th_p<q} (see (\ref{Eq_7})) there exists $%
n_{0}\in \mathbb{N}$ such that 
\begin{equation}\label{ast1}
 ~\left\lfloor \log _{2}n\right\rfloor <\left( n-1\right)
h_{2}\left( 3\right) \text{; }\forall n\geq n_{0}\text{,}
\end{equation}
and for this $n_{0}$, $2^{\nu _{2}\left( n!\right) }>q^{\nu _{q}\left(
n!\right) }$ for all primes $q>2$ and all integers $n\geq n_{0}$. By (\ref%
{Def_h_p(x)}) we have 
\begin{equation*}
h_{2}\left( 2_{\text{succ}}\right) =h_{2}\left( 3\right) =1-\frac{1}{2}\frac{%
\ln 3}{\ln 2}>0.2075>\frac{1}{5}  \label{Eq_h_2(3)}
\end{equation*}
and hence we get that any $n$ satisfying 
\begin{equation}\label{ast2}
 \left\lfloor \log _{2}n\right\rfloor \leq \frac{n-1%
}{5}\text{,}
\end{equation}%
also satisfies \eqref{ast1}. By inspection, the smallest integer
solution of \eqref{ast2} is $n=21$, and \eqref{ast2} also holds true for all $21\leq n\leq 31$. Now, $\left\lfloor \log
_{2}n\right\rfloor $ is a step function which is constant on each step (an
interval of the form $\left[ 2^{k},2^{k+1}-1\right] $) and increases by one
unit from step to step. On the other hand, $\frac{n-1}{5}$ is a linear
function of $n$ that increases by $\frac{2^{k}}{5}$ along the step $\left[
2^{k},2^{k+1}-1\right] $. Since $n=32$ corresponds to $k=5$, and $\frac{2^{5}%
}{5}>6>1$, it is clear that \eqref{ast2} holds also for all $%
n\geq 32$. We have thus shown that $2^{\nu _{2}\left( n!\right) }>q^{\nu
_{q}\left( n!\right) }$ holds true for all $n\geq 21$ and all primes $q>2$.
It remains to consider the interval $2\leq n\leq 20$. Since this interval is
finite and the relevant values of $q$ are the odd primes in this same
interval, namely the primes $3,5,7,11,13,17,19$, one can check
\textquotedblleft by hand\textquotedblright (e.g., it is easy to write a
GAP \cite{GAP2025} program) that the inequality $2^{\nu _{2}\left( n!\right)
}>q^{\nu _{q}\left( n!\right) }$ is true for all integers $2\leq n\leq 20$
and all primes $q\in \left\{ 3,5,7,11,13,17,19\right\} $ with the single
exception $q=n=3$ -- see Section~\ref{Sect_GAP_Code}.
\end{proof}

\subsection{Proof of Theorem~\ref{Th_twin_primes}} \label{Subsect_twin_primes_proof}

Let $n\geq 1$ and $b\geq 2$ be integers. Recall that the base $b$
representation of $n$ is the unique sequence 
\begin{equation*}
\left( n\right) _{b}:=\left( d_{l}\text{,}d_{l-1},\ldots ,d_{1},d_{0}\right)
\end{equation*}
of non-negative integers (the base $b$ digits of $n$) $d_{i}$, $0\leq i\leq
l $ such that $d_{i}\in \left\{0,\ldots ,b-1\right\} $, $d_{l}\geq 1$,
and 
\begin{equation*}
n=d_{l}b^{l}+d_{l-1}b^{l-1}+\cdots +d_{1}b+d_{0}.
\end{equation*}%
Furthermore, we say that $n$ is an $(l+1)$-digit number in base $b$, and $%
s_{b}\left( n\right) :=\sum\limits_{i=0}^{l}d_{i}$.

\begin{lemma}
\label{Lem_basic_n-s_b(n)_properties}With notation as above, $n-s_{b}\left(
n\right) =0$ for all $1\leq n\leq b-1$, and $n-s_{b}\left( n\right) >0$ for
all $n\geq b$. Furthermore, $\left( b-1\right) |\left( n-s_{b}\left(
n\right) \right) $.
\end{lemma}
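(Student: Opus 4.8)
The plan is to reduce all three assertions to the single elementary identity
\begin{equation*}
n-s_{b}\left( n\right) =\sum_{i=0}^{l}d_{i}\left( b^{i}-1\right) \text{,}
\end{equation*}
which is obtained by writing $n=\sum_{i=0}^{l}d_{i}b^{i}$ and $s_{b}\left( n\right) =\sum_{i=0}^{l}d_{i}$ and subtracting term by term. Once this identity is in hand, each claim is read off from the right-hand side, so the only real work is setting up notation carefully and being attentive to the degenerate single-digit case.

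First I would dispose of the range $1\leq n\leq b-1$: here $\left( n\right) _{b}$ consists of the single digit $d_{0}=n$ (so $l=0$), and the sum above has its only term equal to $d_{0}\left( b^{0}-1\right) =0$, giving $n-s_{b}\left( n\right) =0$. Next, for $n\geq b$ we have $l\geq 1$ and $d_{l}\geq 1$. Every summand $d_{i}\left( b^{i}-1\right) $ is nonnegative because $d_{i}\geq 0$ and $b\geq 2$, and the leading summand obeys $d_{l}\left( b^{l}-1\right) \geq b^{l}-1\geq b-1\geq 1$ since $l\geq 1$; hence the total sum is strictly positive. Finally, for the divisibility statement I would use $b\equiv 1\pmod{b-1}$, equivalently the factorization $b^{i}-1=\left( b-1\right) \left( b^{i-1}+\cdots +b+1\right) $, which shows $\left( b-1\right) \mid \left( b^{i}-1\right) $ for every $i\geq 0$; therefore $b-1$ divides each summand $d_{i}\left( b^{i}-1\right) $ and so divides their sum $n-s_{b}\left( n\right) $.

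I do not expect a genuine obstacle in this lemma; it is purely bookkeeping. The only points requiring a little care are (i) phrasing the single-digit case so that the identity degenerates to a single vanishing term rather than an empty sum, and (ii) invoking the bound $b^{l}-1\geq b-1$ only after noting $l\geq 1$, since this is exactly what separates the $n\geq b$ case from the $n\leq b-1$ case. An alternative to the displayed identity would be a short induction on $n$ (adding $1$ either increments $d_{0}$, leaving $n-s_{b}(n)$ unchanged, or triggers one or more carries, each of which changes $n-s_{b}(n)$ by a positive multiple of $b-1$), but the closed-form identity is cleaner and makes the $\left( b-1\right) \mid \left( n-s_{b}(n)\right) $ claim immediate, so that is the route I would take.
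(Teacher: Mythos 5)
Your argument is correct and is essentially the same as the paper's: the paper also writes $n-s_{b}(n)=\sum_{i=0}^{l}d_{i}(b^{i}-1)$, drops the $i=0$ term, bounds the sum below by $b^{l}-1$ when $n\geq b$, and uses $(b-1)\mid(b^{i}-1)$ for the divisibility claim. You merely spell out the single-digit case that the paper dismisses as ``clear.''
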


\begin{proof}
The first claim is clear. For $n\geq b$ we have: 
\begin{equation*}
n-s_{b}\left( n\right)
=\sum\limits_{i=0}^{l}d_{i}b^{i}-\sum\limits_{i=0}^{l}d_{i}=\sum%
\limits_{i=0}^{l}d_{i}\left( b^{i}-1\right)
=\sum\limits_{i=1}^{l}d_{i}\left( b^{i}-1\right) \geq b^{l}-1>0\text{,}
\end{equation*}%
where $d_{l}\geq 1$ was used. Moreover, $\left( b-1\right) |\left(
b^{i}-1\right) $ for all $1\leq i\leq l$, so $\left( b-1\right) |\left(
n-s_{b}\left( n\right) \right) $ follows.
\end{proof}

Let $p$ be a prime such that $p+2$ is also a prime. Set 
\begin{equation*}
q:=p+2\text{, }k:=\frac{p^{2}+p}{2}-1.
\end{equation*}%
In order to prove Theorem \ref{Th_twin_primes}, we have to prove that 
\begin{equation*}
q^{\nu _{q}\left( k!\right) }>p^{\nu _{p}\left( k!\right) }\text{;}
\end{equation*}%
and

\begin{equation}
~~~~~~p^{\nu _{p}\left( n!\right) }>q^{\nu _{q}\left( n!\right) }\text{; }%
\forall n\geq k+1.  \label{Eq_9}
\end{equation}

\begin{lemma}
\label{Lem_nu_q_(k!)=nu_p_(k!)} The identity
\begin{equation*}
\nu _{q}\left( k!\right) =\nu _{p}\left( k!\right) =\frac{p-1}{2}
\end{equation*}%
holds, and consequently, $q^{\nu _{q}\left( k!\right) }>p^{\nu _{p}\left( k!\right)
} $.
\end{lemma}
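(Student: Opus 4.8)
The plan is to read both multiplicities straight off the alternative Legendre formula (\ref{Legendre_formula_alternative}), namely $\nu_{r}(k!) = \tfrac{k-s_{r}(k)}{r-1}$, once the base-$p$ and base-$q$ digit expansions of $k$ have been identified. The key algebraic observation is the factorization
\[
k=\frac{p^{2}+p}{2}-1=\frac{(p-1)(p+2)}{2}=\frac{(p-1)q}{2},
\]
together with the fact that a twin-prime pair $p,q=p+2$ forces $p\geq 3$ (the pair $2,4$ fails), so $p$ is odd and $m:=\frac{p-1}{2}$ is a positive integer satisfying $1\leq m<p<q$.

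First I would determine $(k)_{p}$. Writing $k=m(p+2)=mp+2m=mp+(p-1)$ with $1\leq m\leq p-1$ and $0\leq p-1\leq p-1$ shows that $k$ is the two-digit base-$p$ number $(k)_{p}=(m,\,p-1)$, hence $s_{p}(k)=m+(p-1)=\frac{3(p-1)}{2}$. Then (\ref{Legendre_formula_alternative}) gives
\[
\nu_{p}(k!)=\frac{k-s_{p}(k)}{p-1}=\frac{\frac{(p-1)q}{2}-\frac{3(p-1)}{2}}{p-1}=\frac{q-3}{2}=\frac{p-1}{2}.
\]
Next I would do the same in base $q$: from $k=mq+0$ with $1\leq m<q$ we get $(k)_{q}=(m,\,0)$ and $s_{q}(k)=m=\frac{p-1}{2}$, so
\[
\nu_{q}(k!)=\frac{k-s_{q}(k)}{q-1}=\frac{\frac{(p-1)q}{2}-\frac{p-1}{2}}{p+1}=\frac{(p-1)(q-1)}{2(p+1)}=\frac{p-1}{2},
\]
using $q-1=p+1$. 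This establishes $\nu_{q}(k!)=\nu_{p}(k!)=\frac{p-1}{2}$.

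For the final (``consequently'') assertion, since $q>p\geq 3$ and the common exponent $\frac{p-1}{2}$ is a positive integer, raising the inequality $q>p$ to that power yields $q^{\nu_{q}(k!)}>p^{\nu_{p}(k!)}$, as claimed.

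I do not expect any genuine obstacle here: the whole argument is arithmetic simplification under the substitution $q=p+2$. The only point requiring a word of justification is that the claimed digit strings are legitimate base-$p$ and base-$q$ representations — each digit in the correct range and the leading digit nonzero — which reduces to the elementary bounds $1\leq\frac{p-1}{2}<p$ and $\frac{p-1}{2}<q$, both immediate for $p\geq 3$. (Alternatively one could bypass the explicit expansions and combine Lemma (\ref{Lem_basic_n-s_b(n)_properties}) with direct computation of $s_{p}(k)$ and $s_{q}(k)$, but exhibiting the two-digit expansions is the most transparent route.)
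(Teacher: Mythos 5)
Your proof is correct and follows essentially the same route as the paper: compute the base-$p$ and base-$q$ digit expansions of $k=\frac{(p-1)(p+2)}{2}$, read off $s_p(k)=\frac{3(p-1)}{2}$ and $s_q(k)=\frac{p-1}{2}$, and substitute into the alternative Legendre formula (\ref{Legendre_formula_alternative}). Your additional remarks verifying the digit bounds merely make explicit what the paper leaves implicit.
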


\begin{proof}
We calculate the exponents $\nu _{q}\left( k!\right) $ and $\nu _{p}\left(
k!\right) $ using (\ref{Legendre_formula_alternative}). We have%
\begin{equation*}
k=\left( \frac{p-1}{2}\right) \left( p+2\right) \Longrightarrow \left(
k\right) _{p+2}=\left( \frac{p-1}{2},0\right) \Longrightarrow s_{p+2}\left(
k\right) =\frac{p-1}{2}.
\end{equation*}%
Similarly%
\begin{equation*}
k=\left( \frac{p-1}{2}\right) p+p-1\Longrightarrow \left( k\right)
_{p}=\left( \frac{p-1}{2},p-1\right) \Longrightarrow s_{p}\left( k\right) =%
\frac{3}{2}\left( p-1\right) .
\end{equation*}%
Substituting in (\ref{Legendre_formula_alternative}) gives:%
\begin{equation*}
\nu _{q}\left( k!\right) =\frac{k-s_{q}\left( k\right) }{q-1}=\frac{\left( 
\frac{p-1}{2}\right) \left( p+2\right) -\frac{p-1}{2}}{p+1}=\frac{p-1}{2}
\end{equation*}

\begin{equation*}
\nu _{p}\left( k!\right) =\frac{k-s_{p}\left( k\right) }{p-1}=\frac{\left( 
\frac{p-1}{2}\right) p+p-1-\frac{3}{2}\left( p-1\right) }{p-1}=\frac{p}{2}+1-%
\frac{3}{2}=\frac{p-1}{2}.
\end{equation*}%
Having proved $\nu _{q}\left( k!\right) =\nu _{p}\left( k!\right) $, $q^{\nu
_{q}\left( k!\right) }>p^{\nu _{p}\left( k!\right) }$ follows from $q>p$.
\end{proof}

Proving (\ref{Eq_9}) requires more effort. First, (\ref{Eq_9}) is equivalent
to 
\begin{equation}
r\left( n,p\right) :=\frac{n-s_{p}\left( n\right) }{n-s_{p+2}\left( n\right) 
}>\left( \frac{\ln \left( p+2\right) }{\ln p}\right) \left( \frac{p-1}{p+1}%
\right) \text{; }\forall n\geq k+1=\frac{p^{2}+p}{2}\text{,}  \label{Eq_10}
\end{equation}%
by Lemma \ref{Lem_from_powers_to_logs}. Note that $n-s_{p+2}\left( n\right)
>0$ by Lemma \ref{Lem_basic_n-s_b(n)_properties} ($p+2\leq n$ since $n\geq
k+1$ and $p\geq 3$).

For a fixed odd prime $p$ we view $r\left( n,p\right) $ as a function from
the set of all integers $n$ satisfying $n\geq k+1$ into the set of positive
real numbers.

\begin{lemma} We obtain the value
\label{Lem_r(k+1,p)=1}$r\left( k+1,p\right) =1$.
\end{lemma}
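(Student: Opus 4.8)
The plan is to compute directly the base-$p$ and base-$(p+2)$ representations of the single integer $k+1=\frac{p^{2}+p}{2}$, read off the digit sums $s_{p}(k+1)$ and $s_{p+2}(k+1)$, and check that they are equal; once this is done, the numerator and the denominator in the definition of $r(k+1,p)$ are literally the same positive rational number, so $r(k+1,p)=1$.

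First I would record that, $p$ being an odd prime, $\frac{p-1}{2}$ and $\frac{p+1}{2}$ are positive integers, and I would reuse the expansions already obtained in the proof of Lemma (\ref{Lem_nu_q_(k!)=nu_p_(k!)}), namely $(k)_{p}=\left(\frac{p-1}{2},p-1\right)$ and $(k)_{p+2}=\left(\frac{p-1}{2},0\right)$. Passing from $k$ to $k+1$ in base $p$ triggers a carry, since the units digit of $(k)_{p}$ is the maximal digit $p-1$: one gets $k+1=\left(\frac{p-1}{2}+1\right)p=\frac{p+1}{2}\cdot p$, and because $\frac{p+1}{2}\le p-1$ (equivalently $p\ge 3$) this is a legitimate two-digit expansion, so $(k+1)_{p}=\left(\frac{p+1}{2},0\right)$ and $s_{p}(k+1)=\frac{p+1}{2}$. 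In base $p+2$ no carry occurs: $k+1=\frac{p-1}{2}\,(p+2)+1$ with $0<\frac{p-1}{2}<p+2$, hence $(k+1)_{p+2}=\left(\frac{p-1}{2},1\right)$ and $s_{p+2}(k+1)=\frac{p-1}{2}+1=\frac{p+1}{2}$.

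It follows that $s_{p}(k+1)=s_{p+2}(k+1)=\frac{p+1}{2}$, so the numerator $(k+1)-s_{p}(k+1)$ and the denominator $(k+1)-s_{p+2}(k+1)$ of $r(k+1,p)$ are one and the same integer. That integer is strictly positive: since $p\ge 3$ we have $k+1=\frac{p^{2}+p}{2}\ge p+2>p$, so Lemma (\ref{Lem_basic_n-s_b(n)_properties}), applied with $b=p$ and again with $b=p+2$, gives $(k+1)-s_{p}(k+1)>0$ and $(k+1)-s_{p+2}(k+1)>0$. Hence $r(k+1,p)$ is the ratio of a nonzero quantity with itself, i.e. $r(k+1,p)=1$.

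There is no real difficulty here; the only points demanding a moment's care are the carry that occurs when incrementing $k$ in base $p$ (because the last digit of $(k)_{p}$ is exactly $p-1$) and the verification that $\frac{p+1}{2}$ is still an admissible leading digit in base $p$, both of which reduce to the inequality $p\ge 3$. This lemma will then serve as the base case $n=k+1$ for the monotonicity/growth argument needed to establish inequality (\ref{Eq_10}) for all $n\ge k+1$.
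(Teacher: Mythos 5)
Your proposal is correct and follows essentially the same route as the paper: compute $(k+1)_p=\left(\tfrac{p+1}{2},0\right)$ and $(k+1)_{p+2}=\left(\tfrac{p-1}{2},1\right)$ from the expansions of $k$ obtained in Lemma~(\ref{Lem_nu_q_(k!)=nu_p_(k!)}), observe that both digit sums equal $\tfrac{p+1}{2}$, and conclude. You simply spell out the carry in base $p$, the admissibility of $\tfrac{p+1}{2}$ as a leading digit, and the nonvanishing of the denominator via Lemma~(\ref{Lem_basic_n-s_b(n)_properties}), which the paper leaves implicit.
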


\begin{proof}
Using $\left( k\right) _{p}$ and $\left( k\right) _{p+2}$ from the proof of
Lemma \ref{Lem_nu_q_(k!)=nu_p_(k!)}, gives 
\begin{equation*}
\left( k+1\right) _{p}=\left( \frac{p+1}{2},0\right) \text{ and }\left(
k+1\right) _{p+2}=\left( \frac{p-1}{2},1\right) .
\end{equation*}%
Hence $s_{p}\left( k+1\right) =s_{p+2}\left( k+1\right) =\frac{p+1}{2}$, and
the claim follows.
\end{proof}

\begin{lemma}
\label{Lem_r(n,p)_has_a global_minimum}The function $r\left( n,p\right) $ has a global
minimum in the interval $n\geq \frac{p^{2}+p}{2}$.
\end{lemma}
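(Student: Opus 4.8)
The plan is to show that $r(n,p)\to 1$ as $n\to\infty$, and then to extract a global minimum by a simple compactness argument on the discrete domain $\{k+1,k+2,\dots\}$, with Lemma~\ref{Lem_r(k+1,p)=1} pinning down the value $r(k+1,p)=1$ at the left endpoint.

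First I would establish the asymptotics. By the base-$b$ digit bounds used just above (the maximal digit is $b-1$ and $n$ has $\lfloor\log_b n\rfloor+1$ digits), $1\le s_b(n)\le (b-1)(\lfloor\log_b n\rfloor+1)$ for every integer base $b\ge 2$, so $s_b(n)/n\to 0$ as $n\to\infty$. Since $n-s_{p+2}(n)>0$ for $n\ge k+1$ (as noted after~(\ref{Eq_10}), using Lemma~\ref{Lem_basic_n-s_b(n)_properties}), we may write
\[
r(n,p)=\frac{1-s_p(n)/n}{\,1-s_{p+2}(n)/n\,},
\]
and both numerator and denominator tend to $1$; hence $\lim_{n\to\infty}r(n,p)=1$.

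Next, set $c:=\inf\{\,r(n,p):n\ge k+1\,\}$. Every value $r(n,p)$ is positive (numerator and denominator are positive for $n\ge k+1$ by Lemma~\ref{Lem_basic_n-s_b(n)_properties}), so $c\ge 0$, and by Lemma~\ref{Lem_r(k+1,p)=1} we have $c\le r(k+1,p)=1$. If $c=1$, the infimum is attained at $n=k+1$ and there is nothing more to prove. If $c<1$, then by the limit computed above there is an integer $N>k+1$ with $r(n,p)>\tfrac{1+c}{2}>c$ for all $n\ge N$; consequently $\inf\{\,r(n,p):n\ge N\,\}\ge\tfrac{1+c}{2}>c$, which forces $c=\inf\{\,r(n,p):k+1\le n\le N-1\,\}$. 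The latter is the infimum of a finite, nonempty set of rationals, hence a minimum, attained at some $n^{\ast}$ with $k+1\le n^{\ast}\le N-1$. In either case a global minimum exists.

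I do not expect a genuine obstacle here; the only steps needing care are deriving $r(n,p)\to 1$ cleanly from the elementary digit bounds, and getting the bookkeeping of the case split right, so that the possibility of the infimum being approached only ``at infinity'' is ruled out precisely when $c<1$.
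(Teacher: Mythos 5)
Your proof is correct and rests on the same two pillars as the paper's: the limit $\lim_{n\to\infty} r(n,p)=1$ derived from the digit-sum bounds, and the anchoring value $r(k+1,p)=1$. The difference is one of packaging. The paper argues by contradiction: assuming no global minimum exists, it asserts the existence of an infinite increasing sequence of indices $k+1=i_0<i_1<\cdots$ along which $r(i_j,p)$ strictly decreases, and then derives a contradiction with the limit. That step — extracting a strictly decreasing subsequence with strictly increasing indices from the mere non-existence of a minimum — is stated without justification and needs a small auxiliary argument (for instance, iterating ``take the min over a finite prefix, then find a strictly smaller value beyond it''). Your direct argument sidesteps this entirely: you set $c=\inf\{r(n,p):n\ge k+1\}$, note $0\le c\le 1$, and observe that if $c<1$ the tail $n\ge N$ is uniformly bounded away from $c$ by the limit, forcing the infimum to be realized as the minimum of the finite set $\{r(n,p):k+1\le n\le N-1\}$. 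This is a cleaner and more airtight version of the same idea; it buys you a proof with no under-justified step, at the cost of a (harmless) explicit case split on whether $c=1$ or $c<1$.
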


\begin{proof}
First we prove $\lim_{n\rightarrow \infty }r\left( n,p\right) =1$. Write 
\begin{equation*}
r\left( n,p\right) :=\frac{n-s_{p}\left( n\right) }{n-s_{p+2}\left( n\right) 
}=\frac{1-\frac{s_{p}\left( n\right) }{n}}{1-\frac{s_{p+2}\left( n\right) }{n%
}}\text{,}
\end{equation*}%
and use the bounds $1\leq s_{p}\left( n\right) \leq \left( p-1\right) \left(
1+\log _{p}n\right) $ (see the proof of Theorem \ref{Th_p<q}). Since a
similar inequality applies to $s_{p+2}\left( n\right) $, we get: 
\begin{equation}
\frac{1-\frac{\left( p-1\right) \left( 1+\log _{p}n\right) }{n}}{1-\frac{1}{n%
}}\leq r\left( n,p\right) \leq \frac{1-\frac{1}{n}}{1-\frac{\left(
p+1\right) \left( 1+\log _{p+2}n\right) }{n}}.  \label{Eq_14}
\end{equation}%
Since $p$ is fixed, we have 
\begin{equation*}
\lim_{n\rightarrow \infty }\frac{\left( p-1\right) \left( 1+\log
_{p}n\right) }{n}=\lim_{n\rightarrow \infty }\frac{\left( p+1\right) \left(
1+\log _{p+2}n\right) }{n}=0\text{,}
\end{equation*}%
and $\lim_{n\rightarrow \infty }r\left( n,p\right) =1$ follows. Now, assume
by contradiction that $r\left( n,p\right) $ has no global minimum in the
interval $n\geq k+1$. By Lemma \ref{Lem_r(k+1,p)=1} we have $r\left(
k+1,p\right) =1$. Since this cannot be a global minimum in the interval $%
n\geq k+1$, there exists an infinite monotonically increasing sequence of
integers $k+1=i_{0}<i_{1}<i_{2}<\cdots $ such that the sequence 
\begin{equation*}
r\left( i_{0},p\right) >r\left( i_{1},p\right) >r\left( i_{2},p\right)
>\cdots
\end{equation*}
is a monotonically decreasing sequence of positive real numbers. It follows
that this sequence is bounded from below by some nonnegative real number
less than $1$. Therefore, there exists $0\leq l<1$ such that $$\lim_{j\rightarrow \infty }r\left( i_{j},p\right) =l.$$ This contradicts $
\lim_{n\rightarrow \infty }r\left( n,p\right) =1$.
\end{proof}

We will prove that (\ref{Eq_10}) holds true for all $n\geq k+1$ by showing
that the global minimum of $r\left( n,p\right) $ in the interval $n\geq k+1$
satisfies (\ref{Eq_10}). The following theorem gives the necessary
information about the global minimum of $r\left( n,p\right) $.

\begin{theorem}
\label{Th_r(n,p)_global_minimum}
\begin{enumerate}
    \item[(a)] The smallest integer $m$ at which the global minimum of $r\left(
n,p\right) $ in the interval $n\geq k+1$ occurs is given by%
\begin{equation}
m=\left\{ 
\begin{array}{c}
\left( p+2\right) ^{2}\text{ if }p\in \left\{ 3,5\right\} \\ 
p^{2}-4\text{ if }p\geq 11.%
\end{array}%
\right.  \label{Eq_11}
\end{equation}
\item[(b)] The value of the global minimum of $r\left( n,p\right) $ in the interval 
$n\geq k+1$ is given by:%
\begin{eqnarray*}
r\left( m,3\right) &=&r\left( 25,3\right) =\frac{5}{6} \\
r\left( m,5\right) &=&r\left( 49,5\right) =\frac{5}{6} \\
r\left( m,p\right) &=&r\left( p^{2}-4,p\right) =\frac{\left( p-1\right) ^{2}%
}{\left( p-2\right) \left( p+1\right) }\text{; }\forall p\geq 11.
\end{eqnarray*}
\end{enumerate} 

\end{theorem}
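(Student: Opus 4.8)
The plan is to work with the reformulation
$r(n,p)=\frac{(p-1)\,\nu_p(n!)}{(p+1)\,\nu_{p+2}(n!)}$,
which is immediate from (\ref{Legendre_formula_alternative}) since $(p-1)\nu_p(n!)=n-s_p(n)$ and $(p+1)\nu_{p+2}(n!)=n-s_{p+2}(n)$. The point of this is that $\nu_p(n!)=\sum_{j\ge 1}\lfloor n/p^{j}\rfloor$ and $\nu_{p+2}(n!)=\sum_{j\ge 1}\lfloor n/(p+2)^{j}\rfloor$ are \emph{explicit} step functions: on any block of consecutive integers on which every $\lfloor n/p^{j}\rfloor$ and $\lfloor n/(p+2)^{j}\rfloor$ is locally constant, $r(n,p)$ is a ratio of two increasing linear functions of $n$, hence nondecreasing on that block, and it can only drop when $n$ passes a multiple of $p+2$ (or of a power of $p+2$). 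So the candidate minima of $r$ sit at multiples of $p+2$ and of $(p+2)^{2}$, and it suffices to evaluate $r$ there.

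First I would do a tail reduction. From $1\le s_p(n)\le (p-1)(L+1)$ with $L=\lfloor\log_p n\rfloor$, and $n-s_{p+2}(n)\ge n-(p+1)(L+1)$ — exactly the crude bounds behind (\ref{Eq_14}) — one gets $1-r(n,p)\le \frac{(p-1)(L+1)-1}{\,n-(p+1)(L+1)\,}$, and an elementary computation shows this is below $\frac{p-3}{(p+1)(p-2)}$ (the complement of the value claimed in (b) for $p\ge 11$) once $n$ exceeds an explicit bound $N(p)$ of size $2p^{2}+O(p)$, and automatically once $n\ge p^{3}$. This leaves the finite range $k+1\le n<N(p)$. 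For the two exceptional primes $p=3,5$ this range is finite and small, so it can simply be checked directly (e.g.\ in GAP, as is done elsewhere in the paper), yielding minimum value $\tfrac56$ at $n=(p+2)^{2}$; from here on I would assume $p\ge 11$.

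On the two–digit range $k+1\le n<p^{2}$ one has $\nu_p(n!)=\lfloor n/p\rfloor$, $\nu_{p+2}(n!)=\lfloor n/(p+2)\rfloor$. The interval $[a(p+2),(a+1)p)$ is nonempty only for $a\le\frac{p-1}{2}$, and for $a=\frac{p-1}{2}$ it reduces to $\{k\}$; hence $\lfloor n/p\rfloor\ge\lfloor n/(p+2)\rfloor+1$ for every $n\ge k+1$, so $r(n,p)\ge\frac{(p-1)(c+1)}{(p+1)c}$ with $c=\lfloor n/(p+2)\rfloor$, which is smallest when $c$ is largest. The largest $c$ attained is $p-2$, realized exactly for $n\in\{p^{2}-4,\dots,p^{2}-1\}$, where also $\lfloor n/p\rfloor=p-1=c+1$, giving $r(n,p)=\frac{(p-1)^{2}}{(p+1)(p-2)}$, first attained at $n=p^{2}-4$. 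Then, for $p^{2}\le n<N(p)$, one has $\nu_p(n!)=\lfloor n/p\rfloor+\lfloor n/p^{2}\rfloor$ with $\lfloor n/p^{2}\rfloor\in\{1,2\}$ and $\nu_{p+2}(n!)=\lfloor n/(p+2)\rfloor+\lfloor n/(p+2)^{2}\rfloor$ with $\lfloor n/(p+2)^{2}\rfloor\in\{0,1\}$; splitting at $p^{2},(p+2)^{2},2p^{2}$ and writing $n=t(p+2)$ at the relevant multiples gives closed forms. On $[p^{2},(p+2)^{2})$ the candidate values are all $\ge 1$; on $[(p+2)^{2},2p^{2}]$ one gets $r(t(p+2),p)=\frac{(p-1)(t+3)}{(p+1)(t+1)}$ while $2t<3p$, minimized at $t=\frac{3p-1}{2}$ with value $\frac{(p-1)(3p+5)}{(p+1)(3p+1)}$, together with $r((p+2)^{2},p)=\frac{(p-1)(p+5)}{(p+1)(p+3)}$, and the short stretch $2p^{2}\le n<N(p)$ is handled the same way. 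Comparing each candidate with $\frac{(p-1)^{2}}{(p+1)(p-2)}$ boils down to elementary inequalities — e.g.\ $\frac{p+5}{p+3}\ge\frac{p-1}{p-2}\iff p\ge 7$ and $\frac{3p+5}{3p+1}\ge\frac{p-1}{p-2}\iff p\ge 9$ — all true for $p\ge 11$. Hence for $p\ge 11$ nothing with $n\ge p^{2}$ undercuts $\frac{(p-1)^{2}}{(p+1)(p-2)}$, so the global minimum over $n\ge k+1$ is $\frac{(p-1)^{2}}{(p+1)(p-2)}$, realized first at $m=p^{2}-4$; for $p\in\{3,5\}$, where $\frac{p+5}{p+3}<\frac{p-1}{p-2}$, the point $(p+2)^{2}$ wins with value $\tfrac56$. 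This is precisely (\ref{Eq_11}) and part (b).

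The hard part is the range $p^{2}\le n\lesssim 2p^{2}$. Because $\frac{(p-1)^{2}}{(p+1)(p-2)}=1-\frac{p-3}{(p+1)(p-2)}$ lies only $O(1/p)$ below $1$, the crude tail estimate has no room there, and one really must enumerate the $O(1)$ families of multiples of $p+2$ and of $(p+2)^{2}$ in that window and verify each inequality by hand; several of these inequalities become equalities exactly at $p=7$, which is the arithmetic reason the dichotomy threshold falls strictly between the twin primes $5$ and $11$.
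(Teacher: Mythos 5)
Your approach is genuinely different from the paper's and conceptually cleaner in one respect. The paper proves part (a) by expanding $n$ in bases $p$ and $p+2$ and chasing the digits through a multi-case analysis keyed to the number of digits and the leading digit; you instead exploit the observation that $r(n,p)=\frac{(p-1)\,\nu _{p}(n!)}{(p+1)\,\nu _{p+2}(n!)}$ can only decrease when $n$ passes a multiple of $p+2$, because $\nu _{p}(n!)$ is nondecreasing in $n$ while $\nu _{p+2}(n!)$ jumps only at multiples of $p+2$. Hence the global minimizer must itself be a multiple of $p+2$, and one need only evaluate $r$ there. This eliminates much of the digit-chasing, and your treatment of the two-digit range $k+1\leq n<p^{2}$ (where $\left\lfloor n/p\right\rfloor \geq \left\lfloor n/(p+2)\right\rfloor +1$ forces $r(n,p)\geq \frac{(p-1)(c+1)}{(p+1)c}$ with $c=\left\lfloor n/(p+2)\right\rfloor \leq p-2$, first attaining equality at $n=p^{2}-4$) is clean and correct. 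Note, though, that the supporting phrase ``a ratio of two increasing linear functions of $n$'' is mis-stated: on any block where no $\left\lfloor n/p^{j}\right\rfloor $ or $\left\lfloor n/(p+2)^{j}\right\rfloor $ changes, $r(n,p)$ is \emph{constant}; the correct one-line argument is that $\nu _{p+2}(n+1)=0$ forces $r(n+1,p)\geq r(n,p)$.

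There is, however, a genuine quantitative gap in your tail reduction, and it is not cosmetic. Plugging $L=\left\lfloor \log _{p}n\right\rfloor =2$ into the crude estimate $1-r(n,p)\leq \frac{(p-1)(L+1)-1}{n-(p+1)(L+1)}=\frac{3p-4}{n-3p-3}$ and solving for when this falls below $\frac{p-3}{(p+1)(p-2)}$ gives $n\geq 3p^{2}+5p+7+\frac{20}{p-3}$, not $2p^{2}+O(p)$ as you assert. Consequently the ``short stretch'' you defer is actually the window $[2p^{2},\,\approx 3p^{2}+5p]$ of length $\sim p^{2}$, across which $\left\lfloor n/p^{2}\right\rfloor $ crosses from $2$ to $3$ and $\left\lfloor n/(p+2)^{2}\right\rfloor $ crosses from $1$ to $2$, producing several further families of candidate multiples of $p+2$ that must all be checked; your sketch does not carry this out and it is not a formality. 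The paper copes by taking $\widetilde{n}=4p^{2}$ and covering the entire range $k+1\leq n\leq 4p^{2}$ with its digit analysis. To close your proof you would either need to enlarge the explicit window accordingly and enumerate the extra candidate multiples of $p+2$, or tighten the tail bound --- e.g.\ using the leading-digit restriction $s_{p}(n)\leq 2p$ valid for $n<3p^{2}$ does give $N(p)=2p^{2}+3p+O(1)$, but this requires a two-step bootstrap rather than ``exactly the crude bounds behind (\ref{Eq_14})''.
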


Before proving Theorem \ref{Th_r(n,p)_global_minimum}, we check that it
implies (\ref{Eq_9}). As we already saw, (\ref{Eq_9}) is equivalent to (\ref%
{Eq_10}). Since $r\left( m,p\right) $ is the global minimum of $r\left(
n,p\right) $ in the interval $n\geq k+1$, (\ref{Eq_10}) follows if and only if

\begin{equation}
r\left( m,p\right) >\left( \frac{\ln \left( p+2\right) }{\ln p}\right)
\left( \frac{p-1}{p+1}\right) .  \label{Eq_13}
\end{equation}

\begin{lemma}
Assuming that Theorem \ref{Th_r(n,p)_global_minimum} is correct, (\ref{Eq_13}%
) holds true for all $p$ such that $p$ and $p+2$ are twin primes.
\end{lemma}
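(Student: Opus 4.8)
The plan is to substitute the explicit values of the global minimum $r(m,p)$ furnished by Theorem (\ref{Th_r(n,p)_global_minimum}) directly into (\ref{Eq_13}), and then to verify the resulting inequalities by elementary means, handling the two exceptional primes $p\in\{3,5\}$ separately from the generic range $p\ge 11$.

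For $p=3$ and $p=5$ the global minimum equals $\frac{5}{6}$, so (\ref{Eq_13}) becomes, respectively, $\frac{5}{6}>\frac{\ln 5}{\ln 3}\cdot\frac{2}{4}$ and $\frac{5}{6}>\frac{\ln 7}{\ln 5}\cdot\frac{4}{6}$. I would clear denominators to reduce the first to $5\ln 3>3\ln 5$, i.e.\ $3^{5}>5^{3}$, i.e.\ $243>125$, and the second to $5\ln 5>4\ln 7$, i.e.\ $5^{5}>7^{4}$, i.e.\ $3125>2401$; both are evidently true, so the small cases are settled without invoking any numerical estimate of a logarithm.

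For $p\ge 11$ we have $r(m,p)=\frac{(p-1)^{2}}{(p-2)(p+1)}$. Since $\frac{p-1}{p+1}>0$, dividing both sides of (\ref{Eq_13}) by it reduces the claim to $\frac{p-1}{p-2}>\frac{\ln(p+2)}{\ln p}$, equivalently (cross-multiplying by the positive quantities $\ln p$ and $p-2$) to $(p-1)\ln p>(p-2)\ln(p+2)$. The plan is then to rewrite $(p-1)\ln p-(p-2)\ln(p+2)=\ln p-(p-2)\ln\!\bigl(1+\tfrac{2}{p}\bigr)$ and to apply the elementary bound $\ln(1+x)<x$ for $x>0$, which gives $(p-2)\ln\!\bigl(1+\tfrac{2}{p}\bigr)<\frac{2(p-2)}{p}<2$, while $p\ge 11>e^{2}$ gives $\ln p>2$; hence the difference is positive, as required.

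I do not expect a genuine obstacle: once Theorem (\ref{Th_r(n,p)_global_minimum}) is granted, the statement collapses to two integer-power comparisons and the single one-variable estimate $\ln p>(p-2)\ln(1+2/p)$ for $p\ge 11$. The only mildly delicate point is ensuring the algebraic manipulations are reversible and that the division by $\frac{p-1}{p+1}$ is legitimate — it is, since $p>1$ — both of which are immediate.
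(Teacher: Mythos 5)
Your proposal is correct and establishes (\ref{Eq_13}) by a genuinely cleaner route than the paper's. For the small cases $p\in\{3,5\}$ the paper simply quotes decimal bounds on $\ln 5/\ln 3$ and $\ln 7/\ln 5$, whereas you clear denominators to the integer comparisons $3^{5}>5^{3}$ and $5^{5}>7^{4}$, removing any appeal to numerical approximation. For $p\ge 11$ both arguments reduce (\ref{Eq_13}) to $(p-1)\ln p-(p-2)\ln(p+2)>0$, but the paper then rewrites this as $-\ln\bigl((1+2/x)^{x}\bigr)+\ln\bigl((x+2)^{2}/x\bigr)$ and argues via the monotone convergence of $(1+2/x)^{x}$ to $e^{2}$ together with monotonicity of $\ln\bigl((x+2)^{2}/x\bigr)$ and a numerical check at $x=11$. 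You instead split off $(p-2)\ln p$ to obtain $\ln p-(p-2)\ln(1+2/p)$, apply the one-line bound $\ln(1+x)<x$ to get $(p-2)\ln(1+2/p)<2-4/p<2$, and note $\ln p\ge\ln 11>\ln e^{2}=2$. This avoids the limit discussion and the monotonicity claims entirely; the only ingredients are $\ln(1+x)<x$ and $11>e^{2}$. Both proofs are valid, but yours is shorter and uses strictly more elementary facts.
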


\begin{proof}
For each $p$ we compare the value of the right hand side of (\ref{Eq_13})
with the value of $r\left( m,p\right) $ as given by Theorem \ref%
{Th_r(n,p)_global_minimum}.

1. $p=3$ 
\begin{equation*}
\frac{\ln \left( p+2\right) }{\ln p}\cdot \frac{p-1}{p+1}=\frac{\ln 5}{\ln 3}%
\cdot \frac{2}{4}<0.733<r\left( 25,3\right) =\frac{5}{6}=0.8333....
\end{equation*}

2. $p=5$ 
\begin{equation*}
\frac{\ln \left( p+2\right) }{\ln p}\cdot \frac{p-1}{p+1}=\frac{\ln 7}{\ln 5}%
\cdot \frac{4}{6}<0.807<r\left( 49,5\right) =\frac{5}{6}=0.8333...
\end{equation*}

3. $p\geq 11$. We have the following chain of equivalences:
\begin{align*}
&\frac{\ln \left( p+2\right) }{\ln p}\cdot \frac{p-1}{p+1}<r\left( m,p\right)
=\frac{\left( p-1\right) ^{2}}{\left( p-2\right) \left( p+1\right) } \\
&\Longleftrightarrow
\frac{\ln \left( p+2\right) }{\ln p}<\frac{p-1}{p-2} \\
&\Longleftrightarrow \left( p-1\right) \ln p>\left( p-2\right) \ln \left( p+2\right) \\
&\Longleftrightarrow 
\left( p-1\right) \ln p-\left( p-2\right) \ln \left( p+2\right) >0.
\end{align*}
Set%
\begin{equation*}
f\left( x\right) :=\left( x-1\right) \ln x-\left( x-2\right) \ln \left(
x+2\right) \text{; }\forall x\geq 11.
\end{equation*}%
We will show that $f\left( x\right) $ is positive in its domain of
definition.%
\begin{align*}
f\left( x\right) & =x\left( \ln x-\ln \left( x+2\right) \right) +2\ln \left(
x+2\right) -\ln x \\
& =x\ln \left( \frac{x}{x+2}\right) +\ln \left( x+2\right) ^{2}-\ln x \\
& =-x\ln \left( \frac{x+2}{x}\right) +\ln \left( \frac{\left( x+2\right) ^{2}%
}{x}\right) \\
& =-\ln \left( \left( \frac{x+2}{x}\right) ^{x}\right) +\ln \left( \frac{%
\left( x+2\right) ^{2}}{x}\right) \\
& =-\ln \left( \left( 1+\frac{2}{x}\right) ^{x}\right) +\ln \left( \frac{%
\left( x+2\right) ^{2}}{x}\right) .
\end{align*}%
Using 
\begin{equation*}
\lim_{x\rightarrow \infty }\left( 1+\frac{2}{x}\right) ^{x}=e^{2}\text{,}
\end{equation*}%
we have 
\begin{equation*}
-\lim_{x\rightarrow \infty }\ln \left( \left( 1+\frac{2}{x}\right)
^{x}\right) =-\ln e^{2}=-2.
\end{equation*}%
Moreover, $-\ln \left( \left( 1+\frac{2}{x}\right) ^{x}\right) $ is
monotonically decreasing towards its limit. On the other hand, $\ln \left( 
\frac{\left( x+2\right) ^{2}}{x}\right) $ is monotonically increasing for $x\geq 11$, and for $x=11$, we have 
\begin{equation*}
\ln \left( \frac{\left( 11+2\right) ^{2}}{11}\right) =\ln \frac{13}{11}+\ln
13>\ln \frac{13}{11}+\ln e^{2}>2.
\end{equation*}%
We get:%
\begin{equation*}
f\left( x\right) >-\lim_{x\rightarrow \infty }\ln \left( \left( 1+\frac{2}{x}%
\right) ^{x}\right) +\ln \left( \frac{\left( 11+2\right) ^{2}}{11}\right)
>-2+2=0.\qedhere
\end{equation*}
\end{proof}

Now we prove Theorem \ref{Th_r(n,p)_global_minimum}. Given that part (a) is
established, deriving part (b) from part (a) is an easy task.

\begin{proof}[Proof of Theorem \protect\ref{Th_r(n,p)_global_minimum}(b)
assuming (a)]
These are just routine calculations that are required for substituting $%
n\leftarrow m$ into $r\left( n,p\right) :=\frac{n-s_{p}\left( n\right) }{%
n-s_{p+2}\left( n\right) }$.\ Equality (\ref{Eq_11}) immediately yields:

1. If $p=3$ then $m=25$ and 
\begin{gather*}
\left( m\right) _{3}=\left( 25\right) _{3}=\left( 2,2,1\right) \text{, }%
s_{3}\left( 25\right) =5 \\
\left( m\right) _{3+2}=\left( 25\right) _{5}=\left( 1,0,0\right) \text{, }%
s_{5}\left( 25\right) =1.
\end{gather*}

2. If $p=5$ then $m=49$ and%
\begin{gather*}
\left( m\right) _{5}=\left( 49\right) _{5}=\left( 1,4,4\right) \text{, }%
s_{5}\left( 49\right) =9 \\
\left( m\right) _{5+2}=\left( 49\right) _{7}=\left( 1,0,0\right) \text{, }%
s_{7}\left( 49\right) =1.
\end{gather*}

3. If $p\geq 11$ then $m=p^{2}-4=\left( p-2\right) \left( p+2\right) =\left(
p-1\right) p+p-4$ and hence 
\begin{gather*}
\left( m\right) _{p}=\left( p-1,p-4\right) \text{, }s_{p}\left(
p^{2}-4\right) =2p-5 \\
\left( m\right) _{p+2}=\left( p-2,0\right) \text{, }s_{p+2}\left(
p^{2}-4\right) =p-2
\end{gather*}

Recall that%
\begin{equation*}
r\left( m,p\right) =\frac{m-s_{p}\left( m\right) }{m-s_{p+2}\left( m\right) }%
.
\end{equation*}

The explicit expressions for $r\left( m,p\right) $ in the various cases are
as follows:%
\begin{equation*}
r\left( m,3\right) =\frac{25-s_{3}\left( 25\right) }{25-s_{5}\left(
25\right) }=\frac{25-5}{25-1}=\frac{5}{6}.
\end{equation*}%
\begin{equation*}
r\left( m,5\right) =\frac{49-s_{5}\left( 49\right) }{49-s_{7}\left(
49\right) }=\frac{49-9}{49-1}=\frac{5}{6}.
\end{equation*}

For all $p\geq 11$ we get%
\begin{gather*}
r\left( m,p\right) =\frac{p^{2}-4-s_{p}\left( p^{2}-4\right) }{%
p^{2}-4-s_{p+2}\left( p^{2}-4\right) } \\
=\frac{p^{2}-4-\left( 2p-5\right) }{p^{2}-4-\left( p-2\right) }=\frac{\left(
p-1\right) ^{2}}{\left( p-2\right) \left( p+1\right) }.
\end{gather*}
\end{proof}

Finally, we turn to prove part (a) of Theorem \ref{Th_r(n,p)_global_minimum}.
First, we show that the global minimum of $r\left( n,p\right) $ in the
interval $n\geq k+1$ occurs at $m$, and then we show that for all $k+1\leq
n<m$ we have $r\left( n,p\right) >r\left( m,p\right) $. Therefore, $m$ is the
smallest integer in the interval $n\geq k+1$ at which the global minimum of $%
r\left( n,p\right) $ in this interval occurs.

We begin by arguing that the global minimum of $r\left( n,p\right) $ occurs
\textquotedblleft not too far\textquotedblright\ from $k+1$. To this end, we
use the lower bound (\ref{Eq_14}) on $r\left( n,p\right) $:

\begin{equation*}
r\left( n,p\right) \geq \frac{1-\frac{\left( p-1\right) \left( \log
_{p}\left( n\right) +1\right) }{n}}{1-\frac{1}{n}}>1-\frac{\left( p-1\right)
\left( \log _{p}\left( n\right) +1\right) }{n}\text{; }\forall n\geq q\text{.%
}
\end{equation*}%
Since $\log _{p}\left( n\right) =\frac{\ln n}{\ln p}$, we have 
\begin{equation}
~r\left( n,p\right) >1-\left( p-1\right) \left( \frac{\ln n}{\ln p}+1\right) 
\frac{1}{n}\text{; }\forall n\geq q.  \label{Eq_15}
\end{equation}

\begin{lemma}
\label{Lem_f(x)}The real function 
\begin{equation*}
f\left( x\right) :=\left( \frac{\ln x}{\ln p}+1\right) \frac{1}{x}\text{; }%
\forall x\geq 1
\end{equation*}%
is positive and monotonically decreasing.
\end{lemma}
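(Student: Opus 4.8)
The plan is to check positivity directly and to establish monotonicity by computing the sign of $f'$. Positivity is immediate: since $p\geq 3$ (as $p$ is the smaller member of a twin prime pair) we have $\ln p>0$, and for $x\geq 1$ we have $\ln x\geq 0$, so $\frac{\ln x}{\ln p}+1\geq 1>0$ while $\frac{1}{x}>0$; hence $f(x)>0$ on $[1,\infty)$.

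For monotonicity I would write $f(x)=\frac{1}{\ln p}\cdot\frac{\ln x}{x}+\frac{1}{x}$, differentiate term by term using $\left(\frac{\ln x}{x}\right)'=\frac{1-\ln x}{x^{2}}$ and $\left(\frac{1}{x}\right)'=-\frac{1}{x^{2}}$, and simplify to
\[
f'\left(x\right)=\frac{1}{x^{2}}\left(\frac{1-\ln x}{\ln p}-1\right)=\frac{1-\ln\left(px\right)}{x^{2}\ln p}.
\]
It then suffices to show that the numerator $1-\ln(px)$ is negative for all $x\geq 1$. This is the only point where the twin-prime hypothesis is used: it forces $p\geq 3>e$, so $px\geq p\geq 3>e$ and therefore $\ln(px)>1$, giving $f'(x)<0$ throughout $[1,\infty)$. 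Hence $f$ is strictly decreasing there.

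There is no real obstacle in this lemma; the computation is entirely elementary. The only subtlety worth flagging is that the sign of $f'$ hinges on $px$ exceeding $e$ — for $p=2$ the function would fail to be monotone near $x=1$ — so the assumption $p\geq 3$ (available since $p$ is the smaller of a pair of twin primes) is genuinely needed, and I would state it explicitly at the start of the proof.
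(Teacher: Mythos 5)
Your proof is correct and takes essentially the same approach as the paper: both compute $f'$ and observe that the hypothesis $p\geq 3>e$ makes the bracketed factor negative. Your rewriting of the derivative as $\frac{1-\ln(px)}{x^{2}\ln p}$ is a slightly cleaner way to see the sign, and your remark about $p=2$ correctly identifies where the assumption is used.
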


\begin{proof}
Positivity is clear. To show that the function is monotonically decreasing,
we calculate its first derivative:%
\begin{equation*}
f^{\prime }\left( x\right) =\frac{1}{\ln p}\frac{1}{x^{2}}+\left( \frac{\ln x%
}{\ln p}+1\right) \frac{-1}{x^{2}}=\frac{1}{x^{2}}\left( \frac{1}{\ln p}-%
\frac{\ln x}{\ln p}-1\right) .
\end{equation*}%
Since $p\geq 3>e$, we have $\frac{1}{\ln p}<1$ and $\frac{1}{\ln p}-1<0$.
Hence, since $\frac{\ln x}{\ln p}\geq 0$ for all $x\geq 1$, $f^{\prime
}\left( x\right) <0$, finishing the proof of the lemma.
\end{proof}

\begin{corollary}
\label{Coro_n>=ntilde}Let $\widetilde{n}\geq q$ be an integer. Then, for all
integers $n\geq \widetilde{n}$ we have 
\begin{equation*}
r\left( n,p\right) >1-\left( p-1\right) \left( \frac{\ln \widetilde{n}}{\ln p%
}+1\right) \frac{1}{\widetilde{n}}.
\end{equation*}
\end{corollary}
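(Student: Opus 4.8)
The plan is to derive this immediately by combining the pointwise bound (\ref{Eq_15}) with the monotonicity of the auxiliary function $f$ supplied by Lemma (\ref{Lem_f(x)}). Recall that (\ref{Eq_15}) states $r\left( n,p\right) >1-\left( p-1\right) f\left( n\right)$ for every integer $n\geq q$, where $f\left( x\right) =\left( \frac{\ln x}{\ln p}+1\right) \frac{1}{x}$. So the first step is simply to observe that the hypothesis $\widetilde{n}\geq q$ guarantees that every integer $n\geq \widetilde{n}$ also satisfies $n\geq q$, and hence (\ref{Eq_15}) applies to each such $n$.

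Next I would invoke Lemma (\ref{Lem_f(x)}): since $f$ is monotonically decreasing on $[1,\infty)$ and $1\leq \widetilde{n}\leq n$, we have $f\left( n\right) \leq f\left( \widetilde{n}\right)$. Because $p-1>0$ (indeed $p\geq 3$ in this section), multiplying by $-\left( p-1\right)$ reverses the inequality, giving $-\left( p-1\right) f\left( n\right) \geq -\left( p-1\right) f\left( \widetilde{n}\right)$, and therefore $1-\left( p-1\right) f\left( n\right) \geq 1-\left( p-1\right) f\left( \widetilde{n}\right)$.

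Finally I would chain the two facts: $r\left( n,p\right) >1-\left( p-1\right) f\left( n\right) \geq 1-\left( p-1\right) f\left( \widetilde{n}\right) =1-\left( p-1\right) \left( \frac{\ln \widetilde{n}}{\ln p}+1\right) \frac{1}{\widetilde{n}}$, which is exactly the claimed bound. There is essentially no obstacle here; the only point requiring a moment's care is checking that the domain condition $\widetilde{n}\geq q$ (rather than merely $n\geq q$) is what lets us fix the right-hand side at $\widetilde{n}$ while ranging $n$ over all integers $\geq \widetilde{n}$, and that the sign of $p-1$ is handled correctly when passing the monotonicity of $f$ through the factor $-\left( p-1\right)$. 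The utility of stating the corollary in this form — with a free parameter $\widetilde{n}$ — is that later one can choose $\widetilde{n}$ judiciously (e.g. near $k+1$ or near $m$) to push the lower bound on $r\left( n,p\right)$ above the threshold appearing in (\ref{Eq_13}) for all sufficiently large $n$, reducing the proof of part (a) of Theorem (\ref{Th_r(n,p)_global_minimum}) to a finite check.
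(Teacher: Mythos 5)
Your proof is correct and follows exactly the same route as the paper: apply (\ref{Eq_15}) using $\widetilde{n}\geq q$ to ensure $n\geq q$, then use the monotone decrease of $f$ from Lemma (\ref{Lem_f(x)}) to replace $f(n)$ by $f(\widetilde{n})$ after multiplying by $-(p-1)$. Nothing to add.
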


\begin{proof}
Let $n\geq \widetilde{n}$ be an integer. Since $\widetilde{n}\geq q$ and $n\geq \widetilde{n}$ we have $n\geq q$ and therefore, by (\ref{Eq_15}), 
\begin{equation*}
r\left( n,p\right) >1-\left( p-1\right) \left( \frac{\ln n}{\ln p}+1\right) 
\frac{1}{n}.
\end{equation*}%
\ \ In the notation of Lemma \ref{Lem_f(x)} we have 
\begin{equation*}
r\left( n,p\right) >1-\left( p-1\right) f\left( n\right) .
\end{equation*}%
Since $f\left( x\right) $ is monotonically decreasing for all $x\geq 1$, $%
1-\left( p-1\right) f\left( x\right) $ is monotonically increasing for all $%
x\geq 1$. Hence, since $n\geq \widetilde{n}$,%
\begin{equation*}
r\left( n,p\right) >1-\left( p-1\right) f\left( n\right) \geq 1-\left(
p-1\right) f\left( \widetilde{n}\right) .
\end{equation*}
\end{proof}

To utilize the last corollary, we look for $\widetilde{n}\geq q$ such that 
\begin{equation}
1-\left( p-1\right) \left( \frac{\ln \widetilde{n}}{\ln p}+1\right) \frac{1}{%
\widetilde{n}}\geq r\left( m,p\right) \text{,}  \label{Eq_16}
\end{equation}%
where $m$ is given by (\ref{Eq_11}). Given such $\widetilde{n}$, Corollary %
\ref{Coro_n>=ntilde} implies $r\left( n,p\right) >r\left( m,p\right) $ for
all $n\geq \widetilde{n}$, and it will remain to establish the claim of part
(a) of Theorem \ref{Th_r(n,p)_global_minimum} for the range $k+1\leq n<%
\widetilde{n}$.

\begin{lemma}
Define $m$ by (\ref{Eq_11}).

(a) For $p=3$, (\ref{Eq_16}) holds true for $\widetilde{n}=3^{4}$.

(b) For $p=5$, (\ref{Eq_16}) holds true for $\widetilde{n}=5^{3}$.

(c) For any prime $p\geq 11$, (\ref{Eq_16}) holds true for $\widetilde{n}%
=4p^{2}$.
\end{lemma}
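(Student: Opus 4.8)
The plan is, for each of the three cases, to substitute the prescribed $\widetilde n$ into the left-hand side of (\ref{Eq_16}), insert the value of $r\left( m,p\right) $ recorded in the proof of Theorem (\ref{Th_r(n,p)_global_minimum})(b) (that computation uses only the definition (\ref{Eq_11}) of $m$, so it is available here), and reduce to a concrete inequality. In parts (a) and (b) the choice $\widetilde n=p^{c}$ is made precisely so that $\frac{\ln \widetilde n}{\ln p}=c$ is an integer and (\ref{Eq_16}) collapses to a rational inequality. For $p=3$ we have $m=25$, $r\left( m,3\right) =\frac{25-5}{25-1}=\frac{5}{6}$, and with $\widetilde n=3^{4}=81$ the left side of (\ref{Eq_16}) is $1-2\cdot \frac{4+1}{81}=\frac{71}{81}$; so it remains to check $\frac{71}{81}\geq \frac{5}{6}$, i.e. $71\cdot 6\geq 5\cdot 81$, and to note $\widetilde n=81\geq q=5$. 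For $p=5$ we have $m=49$, $r\left( m,5\right) =\frac{49-9}{49-1}=\frac{5}{6}$, and with $\widetilde n=5^{3}=125$ the left side is $1-4\cdot \frac{3+1}{125}=\frac{109}{125}$, so one checks $\frac{109}{125}\geq \frac{5}{6}$ and $\widetilde n=125\geq q=7$.

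Part (c) is structurally identical but the inequality is tight, so it needs a little care. With $\widetilde n=4p^{2}$ we get $\frac{\ln \widetilde n}{\ln p}+1=3+\log _{p}4$, and $r\left( m,p\right) =\frac{\left( p-1\right) ^{2}}{\left( p-2\right) \left( p+1\right) }$, so $1-r\left( m,p\right) =\frac{p-3}{\left( p-2\right) \left( p+1\right) }>0$ for $p\geq 11$. Hence (\ref{Eq_16}) is equivalent to
\begin{equation*}
\left( p-1\right) \bigl(3+\log _{p}4\bigr)\leq \frac{4p^{2}\left( p-3\right) }{\left( p-2\right) \left( p+1\right) }\text{.}
\end{equation*}
Using the algebraic identity $\frac{4p^{2}\left( p-3\right) }{\left( p-2\right) \left( p+1\right) }=4\left( p-2\right) -\frac{16}{\left( p-2\right) \left( p+1\right) }$ and dividing through by $p-1$, this rearranges to
\begin{equation*}
\log _{p}4\leq \frac{p-5}{p-1}-\frac{16}{\left( p-1\right) \left( p-2\right) \left( p+1\right) }\text{.}
\end{equation*}
I would then note that the left side is strictly decreasing in $p$, while the right side, being $1-\frac{4}{p-1}$ minus a positive strictly decreasing term, is strictly increasing in $p$ on $[11,\infty )$; hence it suffices to verify the inequality at $p=11$, i.e. $\log _{11}4\leq \frac{6}{10}-\frac{16}{1080}=\frac{79}{135}$, equivalently $2^{270}\leq 11^{79}$. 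This last inequality follows from the finite check $11^{7}=19487171>16777216=2^{24}$: raising to the $79$th power and comparing with the $7$th power of $2^{270}$,
\begin{equation*}
\left( 11^{79}\right) ^{7}=\left( 11^{7}\right) ^{79}>\left( 2^{24}\right) ^{79}=2^{1896}>2^{1890}=\left( 2^{270}\right) ^{7}\text{,}
\end{equation*}
whence $11^{79}>2^{270}$. Finally $\widetilde n=4p^{2}\geq q=p+2$ is trivial.

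The one genuinely delicate point is the base case $p=11$ of part (c): since $\log _{11}4\approx 0.578$ sits just below the threshold $\frac{79}{135}\approx 0.585$, a crude estimate such as $2^{10}<11^{3}$ (which yields only $2^{270}<11^{81}$) is not sharp enough, and one must find the right rational exponent bound; the estimate $2^{24}<11^{7}$ does the job with room to spare. The remaining ingredients---the monotonicity of both sides in $p$, the polynomial identity, and the purely numerical verifications in (a) and (b)---are routine.
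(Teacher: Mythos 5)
Your proofs of parts (a) and (b) are exactly the paper's: substitute $\widetilde{n}=p^{c}$, collapse $\frac{\ln\widetilde{n}}{\ln p}$ to an integer, and verify the resulting rational inequality against $r(m,p)=\frac{5}{6}$. Part (c) is where you diverge, and your route is genuinely different. The paper rewrites (\ref{Eq_16}) as the polynomial inequality $p^{3}-3p^{2}\geq c_{p}\left(p^{3}-2p^{2}-p+2\right)$ with $c_{p}=\frac{1}{4}\left(3+\frac{\ln 4}{\ln p}\right)$, then sets $f(x)=x^{3}-3x^{2}-c_{x}\left(x^{3}-2x^{2}-x+2\right)$ and proves $f(x)>0$ on $[11,\infty)$ by computing $f'(x)$ (which involves $(c_{x})'$), bounding $c_{x}<0.895$, showing $f'(x)>0$, and finally checking $f(11)>0$ numerically. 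You instead isolate $\log_{p}4$ on one side via the identity $\frac{4p^{2}(p-3)}{(p-2)(p+1)}=4(p-2)-\frac{16}{(p-2)(p+1)}$, obtaining
\begin{equation*}
\log_{p}4\leq \frac{p-5}{p-1}-\frac{16}{(p-1)(p-2)(p+1)}\text{,}
\end{equation*}
observe that the left side is decreasing and the right side increasing in $p$, and so reduce the whole of (c) to the single integer comparison $2^{270}<11^{79}$, which you certify via $2^{24}<11^{7}$. All the steps check out (the algebraic identity, the monotonicity claims, the exponent bookkeeping $24\cdot 79=1896>1890=270\cdot 7$, and the arithmetic $71\cdot 6\geq 5\cdot 81$, $109\cdot 6\geq 5\cdot 125$). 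What your approach buys is the elimination of the derivative calculation involving the non-constant coefficient $c_{x}$: a two-sided monotonicity observation replaces the paper's $f'(x)>0$ argument, and the asymptotic behaviour is absorbed automatically rather than estimated. The cost is that you must hit the tight base case $p=11$ with a sharp rational bound on $\log_{11}4$, which you handle correctly by finding the right exponent pair; the paper's approach instead needs the decimal bound $c_{11}<0.895$, which is a comparable level of delicacy. Both arguments are valid; yours is arguably the cleaner one for part (c).
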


\begin{proof}
(a) If $p=3$ and $\widetilde{n}=3^{4}$, we have:%
\[
1-\left( p-1\right) \left( \frac{\ln \widetilde{n}}{\ln p}+1\right) \frac{1}{%
\widetilde{n}}=1-2\left( \frac{\ln 3^{4}}{\ln 3}+1\right) \frac{1}{81}=1-%
\frac{10}{81}
=\frac{71}{81}>\frac{5}{6}=r\left( 25,3\right).
\]

(b) If $p=5$ and $\widetilde{n}=5^{3}$, we have:%
\begin{gather*}
1-\left( p-1\right) \left( \frac{\ln \widetilde{n}}{\ln p}+1\right) \frac{1}{%
\widetilde{n}}=1-4\left( \frac{\ln 5^{3}}{\ln 5}+1\right) \frac{1}{5^{3}}
=1-\frac{4\cdot 4}{125}=\frac{109}{125}>\frac{5}{6}=r\left( 49,5\right).
\end{gather*}

(c) If $p\geq 11$ and $\widetilde{n}=4p^{2}$, we have to show that 
\begin{equation*}
1-\left( p-1\right) \left( \frac{\ln 4p^{2}}{\ln p}+1\right) \frac{1}{4p^{2}}%
\geq r\left( m,p\right) \text{; }\forall p\geq 11.
\end{equation*}%
We have 
\begin{equation*}
\left( \frac{\ln 4p^{2}}{\ln p}+1\right) \frac{1}{4p^{2}}=\left( \frac{\ln
4+\ln p^{2}}{\ln p}+1\right) \frac{1}{4p^{2}}=\frac{1}{4}\left( 3+\frac{\ln 4%
}{\ln p}\right) \frac{1}{p^{2}}.
\end{equation*}%
Therefore, setting $c_{p}:=\frac{1}{4}\left( 3+\frac{\ln 4}{\ln p}\right) $,
we have to show that 
\begin{equation*}
1-\frac{c_{p}\left( p-1\right) }{p^{2}}\geq \frac{\left( p-1\right) ^{2}}{%
\left( p-2\right) \left( p+1\right) }\text{; }\forall p\geq 11.
\end{equation*}%
We have:%
\begin{align*}
&1-\frac{c_{p}\left( p-1\right) }{p^{2}}\geq \frac{\left( p-1\right) ^{2}}{%
\left( p-2\right) \left( p+1\right) } \\
&\Longleftrightarrow
p^{2}\left( p-2\right) \left( p+1\right) -p^{2}\left( p-1\right) ^{2}\geq
c_{p}\left( p-1\right) \left( p-2\right) \left( p+1\right) \\
&\Longleftrightarrow
p^{2}\left( p^{2}-p-2\right) -p^{2}\left( p^{2}-2p+1\right) \geq c_{p}\left(
p-2\right) \left( p^{2}-1\right) \\
&\Longleftrightarrow 
p^{3}-3p^{2}\geq c_{p}\left( p^{3}-2p^{2}-p+2\right).
\end{align*}
Set 
\begin{gather*}
f\left( x\right) :=x^{3}-3x^{2}-c_{x}\left( x^{3}-2x^{2}-x+2\right) \\
\text{where }c_{x}:=\frac{1}{4}\left( 3+\frac{\ln 4}{\ln x}\right)
\end{gather*}%
Then 
\begin{equation*}
f^{\prime }\left( x\right) =3x^{2}-6x-\left( c_{x}\right) ^{\prime }\left(
x^{3}-2x^{2}-x+2\right) -c_{x}\left( 3x^{2}-4x-1\right) .
\end{equation*}%
Since%
\begin{equation*}
\left( c_{x}\right) ^{\prime }:=\frac{1}{4}\left( 3+\frac{\ln 4}{\ln x}%
\right) ^{\prime }=\frac{\ln 4}{4}\frac{-1}{x\left( \ln x\right) ^{2}}\text{,%
}
\end{equation*}%
we have%
\begin{eqnarray*}
f^{\prime }\left( x\right) &=&3x^{2}-6x+\frac{\ln 4}{4}\frac{1}{x\left( \ln
x\right) ^{2}}\left( x^{3}-2x^{2}-x+2\right) -c_{x}\left( 3x^{2}-4x-1\right)
\\
&=&\left( 3x^{2}-4x-1\right) \left( 1-c_{x}\right) +\frac{\ln 4}{4}\frac{1}{%
x\left( \ln x\right) ^{2}}\left( x-2\right) \left( x^{2}-1\right) +1-2x\text{%
.}
\end{eqnarray*}%
In order to prove that $f^{\prime }\left( x\right) >0$ for all $x\geq 11$,
start with 
\begin{equation*}
\frac{\ln 4}{4}\frac{1}{x\left( \ln x\right) ^{2}}\left( x-2\right) \left(
x^{2}-1\right) +1>1\text{; }\forall x\geq 11.
\end{equation*}%
Then observe that $\left( c_{x}\right) ^{\prime }<0$ for all $x\geq 11$, so $%
c_{x}$ is maximal in $[11,\infty )$ at $x=11$ and hence $c_{x}\leq
c_{11}<0.895<\frac{9}{10}$, which gives $1-c_{x}>\frac{1}{10}$ for all $x\geq
11$. Since $3x^{2}-4x-1>0$ for all $x\geq 11$, we have 
\begin{equation*}
\left( 3x^{2}-4x-1\right) \left( 1-c_{x}\right) -2x>\frac{1}{10}\left(
3x^{2}-4x-1\right) -2x\text{; }\forall x\geq 11\text{,}
\end{equation*}%
and hence it suffices to prove that $$\frac{1}{10}\left( 3x^{2}-4x-1\right)
-2x>0; \forall x\geq 11.$$ But the last inequality is equivalent to $%
3x^{2}-24x=3x\left( x-8\right) >1$, which is true for all $x\geq 11$. This
concludes the proof that $f^{\prime }\left( x\right) >0$ for all $x\geq 11$.
Using $c_{x}<0.895$ for all $x\geq 11$, we check that 
\begin{equation*}
f\left( 11\right) >968-1080\cdot 0.895=1.4>0\text{,}
\end{equation*}
which, together with the positivity of $f^{\prime }\left( x\right) $ for all $%
x\geq 11$, concludes the proof of the lemma.
\end{proof}

\begin{proposition}
\label{Prop_3_5_ntilde}Let $p\in \left\{ 3,5\right\} $, $k:=\frac{p^{2}+p}{2}%
-1$, $m:=\left( p+2\right) ^{2}$ and $\widetilde{n}:=p^{4}$ if $p=3$ and $%
\widetilde{n}:=p^{3}$ if $p=5$. Then $r\left( n,p\right) \geq r\left(
m,p\right) =\frac{5}{6}$ for all $k+1\leq n\leq \widetilde{n}$, and if $%
k+1\leq n<m$ then $r\left( n,p\right) >r\left( m,p\right) $.
\end{proposition}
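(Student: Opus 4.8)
The plan is to recognize that Proposition (\ref{Prop_3_5_ntilde}) concerns only finitely many integers $n$, so for each of the two primes $p=3$ and $p=5$ it is a bounded — and in fact fairly small — verification. Spelling out the data: for $p=3$ one has $k+1=6$, $m=25$ and $\widetilde{n}=81$; for $p=5$ one has $k+1=15$, $m=49$ and $\widetilde{n}=125$, and in both cases $m<\widetilde{n}$. The value $r\left(m,p\right)=\frac{5}{6}$ is a one-line arithmetic computation with the explicit $m$, already carried out in the proof of Theorem (\ref{Th_r(n,p)_global_minimum})(b), and it does not presuppose part (a). So the entire content of the proposition is the pair of assertions $r\left(n,p\right)\ge\frac{5}{6}$ for every integer $n$ with $k+1\le n\le\widetilde{n}$, together with $r\left(n,p\right)>\frac{5}{6}$ whenever moreover $n<m$.

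First I would pass to a denominator-free reformulation. Since $n\ge k+1$ forces $n\ge p+2$, Lemma (\ref{Lem_basic_n-s_b(n)_properties}) gives $n-s_{p+2}\left(n\right)>0$, so $r\left(n,p\right)\ge\frac{5}{6}$ is equivalent, after clearing denominators, to
\begin{equation*}
n+5\,s_{p+2}\left(n\right)\ge 6\,s_{p}\left(n\right)\text{,}
\end{equation*}
and the analogous statement holds with the strict inequality. Using the crude bound $s_{p}\left(n\right)\le\left(p-1\right)\left(\left\lfloor\log_{p}n\right\rfloor+1\right)$ from the proof of Theorem (\ref{Th_p<q}), one obtains a uniform cap on $6\,s_{p}\left(n\right)$ over $[k+1,\widetilde{n}]$: every $n\le 81$ has $s_{3}\left(n\right)\le 8$, and every $n\le 125$ has $s_{5}\left(n\right)\le 12$. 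Because $s_{p+2}\left(n\right)\ge 1$, the displayed inequality is therefore automatic, and in fact strict, once $n$ exceeds that cap — concretely once $n\ge 48$ when $p=3$ and once $n\ge 72$ when $p=5$. This trims the remaining work to the short initial segments $6\le n\le 47$ for $p=3$ and $15\le n\le 71$ for $p=5$; note that each of these already contains the range $k+1\le n<m$ in which strictness is required.

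On those finitely many values I would simply tabulate the base-$p$ and base-$(p+2)$ digit sums, form $r\left(n,p\right)=\frac{n-s_{p}\left(n\right)}{n-s_{p+2}\left(n\right)}$, and read off that $r\left(n,p\right)\ge\frac{5}{6}$ holds throughout, with the value $\frac{5}{6}$ attained exactly at $n=m$. This is a purely mechanical check, which I would carry out with a short GAP program as done elsewhere in the paper (\cite{GAP2025}) and then record in the text; the only step needing a moment's care is the strict/non-strict bookkeeping, namely verifying that no $n$ with $k+1\le n<m$ attains $\frac{5}{6}$ exactly (and that $m$ itself lies in $[k+1,\widetilde{n}]$, which holds since $m<\widetilde{n}$).

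For context, together with the lemma preceding this proposition — which establishes that (\ref{Eq_16}) holds at $\widetilde{n}=3^{4}$, respectively $\widetilde{n}=5^{3}$ — and Corollary (\ref{Coro_n>=ntilde}), which then yields $r\left(n,p\right)>r\left(m,p\right)$ for all $n\ge\widetilde{n}$, this proposition will show that the global minimum of $r\left(n,p\right)$ on $n\ge k+1$ equals $\frac{5}{6}$ and is first attained at $m$, that is, part (a) of Theorem (\ref{Th_r(n,p)_global_minimum}) in the cases $p\in\{3,5\}$. I do not anticipate any genuine obstacle here; the only "difficulty" is organizational — keeping the finite case analysis small enough to be presentable — which the digit-sum cap above already addresses.
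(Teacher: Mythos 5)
Your proposal is correct and takes essentially the same approach as the paper: the paper's entire proof of this proposition is the single phrase ``GAP calculation,'' and your argument likewise bottoms out in a mechanical check of $r(n,p)$ on a finite range by computer. The denominator-clearing and the digit-sum cap you add merely trim that range; they are sound, but do not change the nature of the proof.
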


\begin{proof}
GAP \cite{GAP2025} calculation.
\end{proof}

\begin{proposition}
Let $p\geq 11$ be a prime such that $q:=p+2$ is a prime. Let $k:=\frac{%
p^{2}+p}{2}-1$, $m:=p^{2}-4$ and $\widetilde{n}:=4p^{2}$. Then $$r\left(
n,p\right) \geq r\left( m,p\right) =\frac{\left( p-1\right) ^{2}}{\left(
p-2\right) \left( p+1\right) }$$ for all $k+1\leq n\leq \widetilde{n}$, and
if $k+1\leq n<m$ then $r\left( n,p\right) >r\left( m,p\right) $.
\end{proposition}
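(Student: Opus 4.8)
The plan is to recast the inequality in a purely arithmetic form and then reduce it to finitely many explicit, $p$-parametrised checks. By (\ref{Legendre_formula_alternative}) we have $n-s_{p}(n)=(p-1)\nu_{p}(n!)$ and $n-s_{p+2}(n)=(p+1)\nu_{p+2}(n!)$, both positive since $n\geq k+1>p+2$, whence $r(n,p)=\frac{(p-1)\,\nu_{p}(n!)}{(p+1)\,\nu_{p+2}(n!)}$. A direct substitution (the one carried out in the proof of Theorem (\ref{Th_r(n,p)_global_minimum})(b)) gives $r(m,p)=\frac{(p-1)^{2}}{(p-2)(p+1)}$, so after clearing the positive denominators the inequality $r(n,p)\geq r(m,p)$ becomes equivalent to
\[
(p-2)\,\nu_{p}(n!)\geq (p-1)\,\nu_{p+2}(n!),
\]
with strictness matching strictness. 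Since $k+1\leq n\leq 4p^{2}<p^{3}<(p+2)^{3}$, Legendre's formula (\ref{Legendre_formula}) collapses each $\nu$ to two terms, $\nu_{p}(n!)=\lfloor n/p\rfloor+\lfloor n/p^{2}\rfloor$ and $\nu_{p+2}(n!)=\lfloor n/(p+2)\rfloor+\lfloor n/(p+2)^{2}\rfloor$, so it suffices to prove
\[
(p-2)\left(\lfloor n/p\rfloor+\lfloor n/p^{2}\rfloor\right)\geq (p-1)\left(\lfloor n/(p+2)\rfloor+\lfloor n/(p+2)^{2}\rfloor\right)
\]
for $\frac{p^{2}+p}{2}\leq n\leq 4p^{2}$, strictly when $n<m=p^{2}-4$.

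Next I would reduce to checking the inequality only at multiples of $p+2$ (together with $n=k+1$). Both sides above are non-decreasing step functions of $n$, and the right-hand side is constant on every block $[M,M+p+1]$ with $M$ a multiple of $p+2$, because the jumps of $\nu_{p+2}(n!)$ occur only at multiples of $p+2$, which are $p+2$ apart. Hence on such a block the difference (left side minus right side) is non-decreasing, so it is minimized at $n=M$. These blocks partition $\mathbb{Z}$ and $k=\frac{p-1}{2}(p+2)$ is itself a multiple of $p+2$, so it is enough to verify: (i) the inequality at $n=k+1$, which holds because $r(k+1,p)=1>r(m,p)$ by Lemma (\ref{Lem_r(k+1,p)=1}) (here $(p-1)^{2}<(p-2)(p+1)$ since $p>3$); and (ii) the inequality at every $M=j(p+2)$ with $\frac{p+1}{2}\leq j\leq\lfloor 4p^{2}/(p+2)\rfloor$, required strict exactly when $M<m$, i.e. when $j<p-2$. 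Using $\nu_{p+2}(M!)=j+\lfloor j/(p+2)\rfloor$ and $\nu_{p}(M!)=j+\lfloor 2j/p\rfloor+\lfloor j(p+2)/p^{2}\rfloor$, the test at $M$ reads
\[
(p-2)\left(\lfloor 2j/p\rfloor+\lfloor j(p+2)/p^{2}\rfloor\right)\geq j+(p-1)\lfloor j/(p+2)\rfloor.
\]
I would dispatch this by splitting on $\lfloor j/(p+2)\rfloor\in\{0,1,2,3\}$ and, inside each case, on the sub-ranges of $j$ that pin down $\lfloor 2j/p\rfloor$ and $\lfloor j(p+2)/p^{2}\rfloor$; on each sub-range the test is linear in $j$ with positive slope on the left, hence controlled by its left endpoint, where it reduces to an elementary polynomial inequality in $p$ valid for $p\geq 11$. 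In particular, for $\frac{p+1}{2}\leq j\leq p-2$ one has $\lfloor j/(p+2)\rfloor=0$, $\lfloor j(p+2)/p^{2}\rfloor=0$ and $\lfloor 2j/p\rfloor=1$, so the test collapses to $p-2\geq j$: strict for $j\leq p-3$ and an equality precisely at $j=p-2$, i.e. on the block $[p^{2}-4,p^{2}-1]$, which is exactly where $r(n,p)=r(m,p)$. This yields simultaneously $r(n,p)\geq r(m,p)$ on all of $[k+1,4p^{2}]$ and the strict inequality for $k+1\leq n<m$.

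The main obstacle is the bookkeeping for the upper blocks. When $\lfloor j/(p+2)\rfloor$ equals $2$ or $3$ the right-hand side carries an extra $2(p-1)$ or $3(p-1)$, so one must confirm that $\lfloor 2j/p\rfloor$ and $\lfloor j(p+2)/p^{2}\rfloor$ have already grown enough at the left endpoints of the relevant sub-ranges; this uses the interleaving $p^{2}<(p+2)^{2}<2p^{2}<2(p+2)^{2}<3p^{2}<3(p+2)^{2}<4p^{2}$ of the squares, which holds for $p\geq 13$. The case $p=11$ must be treated separately, since there $4p^{2}=484<507=3(p+2)^{2}$ disturbs the top of the range: for $p=11$ the interval is the finite $[66,484]$, and the finitely many tests for $j\in\{6,\ldots,37\}$ together with $n=66$ can be completed by direct computation, e.g. with GAP (\cite{GAP2025}), exactly as in Proposition (\ref{Prop_3_5_ntilde}). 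Everything else is routine verification of polynomial inequalities in $p$.
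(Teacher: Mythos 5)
Your reduction is a genuinely different and appealing route. Rather than analysing the base-$p$ and base-$(p+2)$ digits of $n$ directly as the paper does, you translate $r(n,p)\geq r(m,p)$ into the clean arithmetic test $(p-2)\nu_{p}(n!)\geq(p-1)\nu_{p+2}(n!)$ and then exploit the fact that $\nu_{p+2}(n!)$ is a step function jumping only at multiples of $p+2$, so that on each block $[M,M+p+1]$ with $(p+2)\mid M$ the difference is non-decreasing and its minimum is achieved at the left endpoint. This reduces the whole interval $[k+1,4p^{2}]$ to $n=k+1$ together with finitely many block starts $M=j(p+2)$, and your formula
\[
(p-2)\bigl(\lfloor 2j/p\rfloor+\lfloor j(p+2)/p^{2}\rfloor\bigr)\geq j+(p-1)\lfloor j/(p+2)\rfloor
\]
is a correct and pleasant reformulation. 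Your verification for $\tfrac{p+1}{2}\leq j\leq p-2$ (test collapses to $p-2\geq j$, strict for $j\leq p-3$, equality at $j=p-2$, i.e.\ at $n=m$) is correct and by itself yields the strict-inequality half of the proposition, since the blocks with $j\leq p-3$ exactly cover $[k+1,m-1]$ once one adds the $n=k+1$ check. The observation that $k=\tfrac{p-1}{2}(p+2)$ is itself a multiple of $p+2$ is the right bookkeeping detail.

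However, the remaining half of the proposition — the weak inequality $r(n,p)\geq r(m,p)$ on $[m,4p^{2}]$ — is precisely the part you have not actually carried out. You state ``I would dispatch this by splitting on $\lfloor j/(p+2)\rfloor\in\{0,1,2,3\}$ and, inside each case, on the sub-ranges of $j$ that pin down $\lfloor 2j/p\rfloor$ and $\lfloor j(p+2)/p^{2}\rfloor$'' and then describe the expected interleaving of thresholds, but none of these sub-cases are written down or verified. That is not an incidental loose end: $j$ ranges from $p-1$ up to roughly $4p-8$, so this is the bulk of the range, and it is exactly the part the paper's proof spends cases (2.1), (2.2.1), (2.2.2), (2.2.3) establishing. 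A sentence asserting each sub-case ``reduces to an elementary polynomial inequality in $p$ valid for $p\geq 11$'' is a promise, not a proof. Similarly, flagging $p=11$ for separate computational verification is fine in spirit (the paper itself uses GAP for $p\in\{3,5\}$), but you must either give the inequality check that replaces the failed interleaving for $p=11$ or actually point to the finite list of $j$-values and state the outcome. Until the upper-block cases are spelled out, the argument does not yet establish the $\geq$ statement on all of $[k+1,\widetilde{n}]$, which is what the proposition claims and what the rest of the paper relies on.
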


\begin{proof}
Let $k+1\leq n\leq \widetilde{n}$, and let 
\begin{equation*}
\left( n\right) _{p}:=\left( d_{l},d_{l-1},\ldots ,d_{1},d_{0}\right) \text{
and }\left( n\right) _{q}:=\left( e_{l^{\prime }},e_{l^{\prime }-1},\ldots
,e_{1},e_{0}\right)
\end{equation*}%
be, respectively, the base $p$ and the base $q$ digits of $n$. Then (see
Lemma \ref{Lem_basic_n-s_b(n)_properties}) 
\begin{equation*}
n-s_{p}\left( n\right)
=\sum\limits_{i=0}^{l}d_{i}p^{i}-\sum\limits_{i=0}^{l}d_{i}=\sum%
\limits_{i=0}^{l}d_{i}\left( p^{i}-1\right)
=\sum\limits_{i=1}^{l}d_{i}\left( p^{i}-1\right) \text{,}
\end{equation*}%
and, similarly, 
\begin{equation*}
n-s_{q}\left( n\right) =\sum\limits_{i=1}^{l^{\prime }}e_{i}\left( \left(
p+2\right) ^{i}-1\right) .
\end{equation*}%
Since $k+1\leq n<\widetilde{n}=4p^{2}$, and $p\geq 11$, we have $%
4p^{2}<4\left( p+2\right) ^{2}<p^{3}$ and hence $l,l^{\prime }\leq 2$. On
the other hand, $\frac{p^{2}+p}{2}\leq n$ implies $l,l^{\prime }\geq 1$.
Therefore,
$$\begin{aligned}
   r\left( n,p\right) :&=\frac{n-s_{p}\left( n\right) }{n-s_{p+2}\left(
n\right) }=\frac{d_{2}\cdot \left( p^{2}-1\right) +d_{1}\cdot \left(
p-1\right) }{e_{2}\cdot \left( \left( p+2\right) ^{2}-1\right) +e_{1}\cdot
\left( p+1\right) } \\
&=\frac{p-1}{p+1}\cdot \frac{d_{2}\cdot \left( p+1\right) +d_{1}}{%
e_{2}\cdot \left( p+3\right) +e_{1}}.
\end{aligned}$$
Hence $r\left( n,p\right) \geq r\left( m,p\right) =\frac{\left( p-1\right)
^{2}}{\left( p-2\right) \left( p+1\right) }$ for all $p\geq 11$ if and only if%
\begin{equation}\label{ast}
 ~\frac{d_{2}\cdot \left( p+1\right) +d_{1}}{e_{2}\cdot
\left( p+3\right) +e_{1}}\geq \frac{p-1}{p-2}, \ \forall p\geq 11\text{.%
}
\end{equation}
Our aim is to show that Condition \eqref{ast} is true.
\begin{enumerate}
    \item[(1).] Suppose that $n$ is a $2$-digit number in base $p$, or in other words, $%
d_{2}=0$. This implies that also $e_{2}=0$ and hence $e_{1}>0$ and condition 
\eqref{ast} reads $\frac{d_{1}}{e_{1}}\geq \frac{p-1}{p-2}$.
Since $e_{2}=d_{2}=0$, we get: 
\begin{equation*}
n=d_{1}p+d_{0}=e_{1}\left( p+2\right) +e_{0}=e_{1}p+2e_{1}+e_{0}.
\end{equation*}%
Since $d_{0}<p$, we must have $e_{1}\leq d_{1}\leq p-1$. Suppose that $%
e_{1}=d_{1}$. Since $$n\geq k+1=\frac{p^{2}+p}{2}=\frac{p+1}{2}\cdot p,$$ we
have $e_{1}=d_{1}\geq \frac{p+1}{2}$ and $2e_{1}\geq p+1$. We obtain a
contradiction since $$n=d_{1}p+d_{0}=e_{1}p+2e_{1}+e_{0}\geq d_{1}p+p+1$$ but $%
d_{0}\leq p-1$. It follows that $e_{1}<d_{1}\leq p-1$. Write $%
d_{1}=e_{1}+\delta $ for some integer $\delta \geq 1$. Then $$\frac{d_{1}}{%
e_{1}}=1+\frac{\delta }{e_{1}} \geq \frac{p-1}{p-2}$$ is equivalent to $$
\frac{\delta }{e_{1}}\geq \frac{1}{p-2}.$$ Thus, \eqref{ast} is
true if and only if $\frac{\delta }{e_{1}}\geq \frac{1}{p-2}$. This is the case since $%
e_{1}<d_{1}\leq p-1$ implies $e_{1}\leq p-2$ and $\delta \geq 1$. Note
further that if $\frac{d_{1}}{e_{1}}=\frac{p-1}{p-2}$, which implies $%
r\left( n,p\right) =r\left( m,p\right) $, then $e_{1}<d_{1}\leq p-1$ implies 
$d_{1}=p-1$ and $e_{1}=p-2$. Therefore, $$%
n=e_{1}p+2e_{1}+e_{0}=p^{2}-4+e_{0}=m+e_{0}.$$ Since $e_{0}\geq 0$, we have,
in this case, $m\leq n$.

\item[(2).]  Suppose that $n$ is a $3$-digit number in base $p$, or in other words, $%
d_{2}\geq 1$. Before proceeding to prove the main claim of the proposition,
note that its last implication is vacuously true since $m$ is a $2$-digit
number in base $p$.

By the assumption $n<4p^{2}$ we get that $d_{2}\leq 3$. Furthermore%
\begin{eqnarray*}
n &=&d_{2}p^{2}+d_{1}p+d_{0}=e_{2}\left( p+2\right) ^{2}+e_{1}\left(
p+2\right) +e_{0} \\
&=&e_{2}p^{2}+\left( 4e_{2}+e_{1}\right) p+4e_{2}+2e_{1}+e_{0}.
\end{eqnarray*}%
Since $d_{1}p+d_{0}<p^{2}$, we must have $e_{2}\leq d_{2}\leq p-1$. We
consider the following cases.

\item[(2.1).] Suppose that $e_{2}=d_{2}$. In this case 
\begin{equation}\label{astast}
d_{1}p+d_{0}=\left( 4e_{2}+e_{1}\right)
p+4e_{2}+2e_{1}+e_{0}.
\end{equation}
Since $d_{0}\leq p-1$, we have $4e_{2}+e_{1}\leq d_{1}$ which is equivalent
to $$e_{1}\leq d_{1}-4d_{2}\leq p-1-4d_{2}.$$ Since $e_{1}\geq 0$, this
implies that $d_{2}\leq \frac{p-1}{4}$. From $e_{1}\leq p-1-4d_{2}$ and $%
d_{2}\geq 1$, we get $e_{1}\leq p-5$ and hence 
\begin{eqnarray*}
4e_{2}+2e_{1}+e_{0} &=&4d_{2}+2e_{1}+e_{0}\leq 4\cdot \frac{p-1}{4}+2p-10+p+1
\\
&=&p-1+2p-10+p+1=4p-10.
\end{eqnarray*}%
Thus, dividing $4e_{2}+2e_{1}+e_{0}$ by $p\geq 11$ with remainder, we have 
\begin{equation}
4e_{2}+2e_{1}+e_{0}=\alpha \cdot p+\beta \text{; }\alpha \in \left\{
0,1,2,3\right\} \text{, }\beta \in \left\{ 0,\ldots ,p-1\right\} .
\label{Eq_17}
\end{equation}%
Rewriting \eqref{astast} in terms of $\alpha $ and $\beta $ we
get:%
\begin{equation*}
d_{1}p+d_{0}=\left( 4e_{2}+e_{1}+\alpha \right) p+\beta .
\end{equation*}%
Hence $\beta =d_{0}$ and $d_{1}=4e_{2}+e_{1}+\alpha $. Since we assume $%
e_{2}=d_{2}$, Condition \eqref{ast} reads%
\begin{gather*}
\frac{d_{2}\left( p+1\right) +2d_{2}+e_{1}+2d_{2}+\alpha }{d_{2}\left(
p+1\right) +2d_{2}+e_{1}}\geq \frac{p-1}{p-2}. \\
\Longleftrightarrow \\
1+\frac{2d_{2}+\alpha }{d_{2}\left( p+1\right) +2d_{2}+e_{1}}\geq 1+\frac{1}{%
p-2}
\end{gather*}%
\begin{gather*}
\Longleftrightarrow \\
\left( 2d_{2}+\alpha \right) \left( p-2\right) \geq d_{2}\left( p+1\right)
+2d_{2}+e_{1} \\
\Longleftrightarrow \\
d_{2}\left( p-7\right) +\alpha \left( p-2\right) -e_{1}\geq 0
\end{gather*}%
If $\alpha >0$, we see, using $e_{1}\leq p-5$ and $p\geq 11$, that the left-hand side is strictly positive. If $\alpha =0$, we have to prove that 
\begin{equation*}
d_{2}\left( p-7\right) -e_{1}\geq 0.
\end{equation*}%
By (\ref{Eq_17}), $$4e_{2}+2e_{1}+e_{0}=4d_{2}+2e_{1}+e_{0}=\beta \leq p-1,$$
or, equivalently%
\begin{equation*}
2e_{1}\leq p-1-4d_{2}-e_{0}\leq p-1-4d_{2}.
\end{equation*}%
Since $d_{2}\geq 1$, this implies $e_{1}\leq \frac{p-5}{2}$. It follows that 
\begin{gather*}
d_{2}\left( p-7\right) -e_{1}\geq d_{2}\left( p-7\right) -\frac{p-5}{2}
\geq \left( p-7\right) -\frac{p-5}{2}=\frac{p-9}{2}>0
\end{gather*}%
as required. This concludes the proof that Condition \eqref{ast}
is true in case (2.1).

\item[(2.2).] Suppose that $e_{2}<d_{2}$. Since $n<4p^{2}$ we have $e_{2}<d_{2}\leq
3$, so $e_{2}\leq 2$.
As before, we start from%
\begin{eqnarray*}
n &=&d_{2}p^{2}+d_{1}p+d_{0}=e_{2}\left( p+2\right) ^{2}+e_{1}\left(
p+2\right) +e_{0} \\
&=&e_{2}p^{2}+\left( 4e_{2}+e_{1}\right) p+4e_{2}+2e_{1}+e_{0}\text{,}
\end{eqnarray*}%
and we write the relations between the two expansions using division with
remainder. Dividing $4e_{2}+2e_{1}+e_{0}$ by $p$ with remainder gives%
\begin{equation}
4e_{2}+2e_{1}+e_{0}=\alpha _{0}\cdot p+\beta _{0}\text{ where }0\leq \beta
_{0}\leq p-1.  \label{(30)}
\end{equation}%
Now we can rewrite 
\begin{equation*}
n=e_{2}p^{2}+\left( 4e_{2}+e_{1}+\alpha _{0}\right) p+\beta _{0}\text{,}
\end{equation*}%
and divide $4e_{2}+e_{1}+\alpha _{0}$ by $p$ with remainder:%
\begin{equation}
4e_{2}+e_{1}+\alpha _{0}=\alpha _{1}\cdot p+\beta _{1}\text{ where }0\leq
\beta _{1}\leq p-1.  \label{(31)}
\end{equation}%
Thus: 
\begin{equation*}
n=\left( e_{2}+\alpha _{1}\right) p^{2}+\beta _{1}p+\beta _{0}\text{,}
\end{equation*}%
and comparing with $n=d_{2}p^{2}+d_{1}p+d_{0}$ gives%
\begin{equation*}
d_{0}=\beta _{0}\text{, }d_{1}=\beta _{1}\text{, }d_{2}=e_{2}+\alpha _{1}%
.
\end{equation*}%
Note that the assumption $e_{2}<d_{2}$ implies $\alpha _{1}\geq 1$.

\item[(2.2.1).] Suppose that $e_{2}=0$. Then $d_{2}=\alpha _{1}$, and Condition \eqref{ast1} reads:
\begin{equation*}
\frac{\alpha _{1}p+d_{1}+\alpha _{1}}{e_{1}}\geq \frac{p-1}{p-2}.
\end{equation*}%
By (\ref{(31)}), $$4e_{2}+e_{1}+\alpha _{0}=\alpha _{1}\cdot p+\beta
_{1}=\alpha _{1}\cdot p+d_{1}.$$ Substituting $\alpha _{1}\cdot
p+d_{1}=e_{1}+\alpha _{0}$ into the last inequality gives: 
\begin{gather*}
\frac{e_{1}+\alpha _{0}+\alpha _{1}}{e_{1}}\geq 1+\frac{1}{p-2}%
\Longleftrightarrow \frac{\alpha _{0}+\alpha _{1}}{e_{1}}\geq \frac{1}{p-2}
\Longleftrightarrow 
\left( \alpha _{0}+\alpha _{1}\right) \left( p-2\right) \geq e_{1}
\end{gather*}%
Since $e_{1}\leq p+1$, the last inequality is true if $\alpha _{0}+\alpha
_{1}\geq 2$. By our assumptions, $\alpha _{1}=d_{2}\geq 1$, so we are done
if $\alpha _{0}\geq 1$. Otherwise, $\alpha _{0}=0$. In this case, since $$
4e_{2}+2e_{1}+e_{0}=\alpha _{0}\cdot p+\beta _{0,}$$ we get $$
2e_{1}+e_{0}=\beta _{0}\leq p-1$$ which implies $e_{1}\leq \frac{p-1}{2},$
and $$\left( \alpha _{0}+\alpha _{1}\right) \left( p-2\right) =\alpha
_{1}\left( p-2\right) \geq e_{1}$$ follows. Thus Condition \eqref{ast} is true in case (2.2.1).

\item[(2.2.2).] Suppose that $e_{2}=1$. Then $d_{2}=\alpha _{1}+1$, and Condition \eqref{ast} reads:%
\begin{gather*}
\frac{\left( \alpha _{1}+1\right) \left( p+1\right) +d_{1}}{p+3+e_{1}}\geq 
\frac{p-1}{p-2}
\Longleftrightarrow
\frac{p+1+\alpha _{1}+\alpha _{1}\cdot p+d_{1}}{p+3+e_{1}}\geq \frac{p-1}{p-2%
}.
\end{gather*}%
By (\ref{(31)}), $$\alpha _{1}\cdot p+d_{1}=4e_{2}+e_{1}+\alpha
_{0}=4+e_{1}+\alpha _{0},$$ we get the equivalent inequality:
\begin{align*}
\frac{p+3+e_{1}+\alpha _{1}+2+\alpha _{0}}{p+3+e_{1}}\geq \frac{p-1}{p-2}
&\Longleftrightarrow
\frac{\alpha _{1}+2+\alpha _{0}}{p+3+e_{1}}\geq \frac{1}{p-2} \\
&\Longleftrightarrow 
\left( \alpha _{1}+\alpha _{0}+2\right) \left( p-2\right) \geq p+3+e_{1}.
\end{align*}
Since $e_{1}\leq p+1$, the last inequality certainly holds true if 
\begin{equation}
\left( \alpha _{1}+\alpha _{0}+2\right) \left( p-2\right) \geq 2p+4.
\label{(32)}
\end{equation}%
Since 
$d_{2}=\alpha _{1}+1$ and $d_{2}>e_{2}=1$, we have $d_{2}=\alpha
_{1}+1\geq 2$, and hence $\alpha _{1}+2\geq 3.$ Therefore, $\alpha
_{1}+\alpha _{0}+2\geq 3$ and
\begin{equation*}
\left( \alpha _{1}+\alpha _{0}+2\right) \left( p-2\right) \geq 3\left(
p-2\right) .
\end{equation*}%
Since $3\left( p-2\right) \geq 2p+4$ is equivalent to $p\geq 10$ and the
proposition assumes $p\geq 11$, the proof of (\ref{(32)}) is done, and this
concludes the proof that Condition \eqref{ast} holds true in case
(2.2.2).

\item[(2.2.3).] Suppose that $e_{2}=2$. Then $d_{2}=\alpha _{1}+2$, and Condition \eqref{ast} reads:
\begin{gather*}
\frac{\left( \alpha _{1}+2\right) \left( p+1\right) +d_{1}}{2\left(
p+3\right) +e_{1}}\geq \frac{p-1}{p-2}
\Longleftrightarrow
\frac{2\left( p+1\right) +\alpha _{1}+\alpha _{1}\cdot p+d_{1}}{2\left(
p+3\right) +e_{1}}\geq \frac{p-1}{p-2}.
\end{gather*}%
Since $$\alpha _{1}\cdot p+d_{1}=4e_{2}+e_{1}+\alpha _{0}=8+e_{1}+\alpha _{0},$$
we get the equivalent inequality:
\begin{align*}
\frac{2\left( p+3\right) +e_{1}+\alpha _{0}+\alpha _{1}+4}{2\left(
p+3\right) +e_{1}}\geq \frac{p-1}{p-2}
&\Longleftrightarrow 
\frac{\alpha _{0}+\alpha _{1}+4}{2\left( p+3\right) +e_{1}}\geq \frac{1}{p-2}
\\
&\Longleftrightarrow
\left( \alpha _{0}+\alpha _{1}+4\right) \left( p-2\right) \geq 2\left(
p+3\right) +e_{1}.
\end{align*}%
Since $e_{1}\leq p+1$, the last inequality certainly holds true if 
\begin{equation*}
\left( \alpha _{0}+\alpha _{1}+4\right) \left( p-2\right) \geq 3p+7.
\end{equation*}%
Since $3\geq d_{2}=$ $\alpha _{1}+2$ and $d_{2}>e_{2}=2$, we have $\alpha
_{1}+2=3$, and hence $\alpha _{1}+4=5$. Therefore $\alpha _{1}+\alpha
_{0}+4\geq 5$ and%
\begin{equation*}
\left( \alpha _{0}+\alpha _{1}+4\right) \left( p-2\right) \geq 5\left(
p-2\right) \geq 3p+7\text{,}
\end{equation*}%
when $p\geq \frac{17}{2}$. Thus, Condition \eqref{ast} holds true
in case (2.2.3).

\end{enumerate}
\end{proof}

{\small

}

\section*{Appendix: Sample GAP Code\label{Sect_GAP_Code}}

The following is a documented GAP program that can be used for producing
examples that illustrate some points discussed in the paper. The code
contains one helper function and a ``main'' script which begins with the 
initialization of the two input parameters $n_1$ and $n_2$.

\lstset{
    basicstyle=\ttfamily\small, 
    breaklines=true,            
    breakatwhitespace=false,    
    columns=fullflexible,
    frame=single,               
    framesep=5pt
}

\begin{lstlisting}
# The_n_factorial_prime_power_factors_order_GAP.txt -
#
# Let n >= 2 be an integer, and let 2 = p_1 < p_2 < p_3 ... < p_k be
# the first k prime numbers where p_k is the largest prime smaller or
# equal n (thus it is useful to view k as a function of n).
# Evidently, p_1 < ... < p_k are all the prime factors of n! and the
# multiplicity of the prime p_i in the prime factorization of n! is
# denoted e_i := \nu_{p_i}(n!).
#
# We can define a new order relation over the set {1,2,3,...,k}, using
# the usual order relation < between the prime powers (p_i)^e_i of n!
# in the following way: For all i \ne j in {1,2,3,...,k} i is smaller
# than j if and only if (p_i)^e_i < (p_j)^e_j.
#
# The input to the current program are two integers 2 <= n_1 <= n_2.
# For each integer n_1 <= n <= n_2, the program calculates the prime
# power factorization of n! and sorts the list [1,2,3,...,k(n)] increasingly
# with respect to the order relation defined by the resulting prime power
# factors of n! as described above. The program prints out the sorted
# [1,2,3,...,k(n)] for each n in the domain [n_1,n_2].
#
# Example: Suppose n = 10. The prime power factors of n! are given by:
# 10! = 2^8 * 3^4 * 5^2 * 7^1 = 256 * 81 * 25 * 7
# and the program will print [4,3,2,1] which is the result of sorting
# [1,2,3,4] with respect to the order relation defined by the prime powers
# 2^8 > 3^4 > 5^2 > 7^1.
#
# Remarks:
#
# 1. The size of the input parameter n_2, and the size of n_2 - n_1 will be
#    constrained by GAP's limitations and the user's system. The program
#    does not check its input. Having said that, the program has run
#    successfully on the input [n_1,n_2] = [2,1000].
# 2. The current implementation calls GAP's PrimePowersInt function in
#    order to obtain the prime power factorization of n!. This task can be
#    performed more efficiently using Legendre's formula, if one needs to
#    study larger values of n.
# 3. One can utilize the current program in order to verify the numerical
#    part of the proof of Corollary 1.3, by taking [n_1,n_2] = [2,20], and
#    checking by inspection that the last entry of the printed list
#    "one_to_k_ordered_by_prime_power_factors_of_n_factorial" equals
#    1 for all n \ne 3.
#

#############################################################################
# list_prime_power_factors function
#
# Helper function that returns the list [p_1^e_1, p_2^e_2, ..., p_k^e_k]
#############################################################################

list_prime_power_factors := function(n)
    local list_pi_ei, num_primes, out, i;

    list_pi_ei := PrimePowersInt(n); # [p_1,e_1,p_2,e_2,...,p_k,e_k]
    num_primes := Length(list_pi_ei)/2;
    out := [];

    for i in [1..num_primes] do
        Add(out, list_pi_ei[2*i-1]^list_pi_ei[2*i]);
    od;

    return out;
end;

#############################################################################
# main
#############################################################################

### Input ###
n_1 := 2;
n_2 := 20;
### End Input ###

Print("\n\nCalculating [1,2,...,k(n)] sorted with respect to the order defined \n");
Print("by the prime power factors of n! for all n in [", n_1, "..", n_2, "]\n");

for n in [n_1..n_2] do
    factorial_n := Factorial(n);
    prime_power_factors := list_prime_power_factors(factorial_n);
    len_prime_power_factors := Length(prime_power_factors); # this is k(n)

    one_to_k_naturally_ordered := [1..len_prime_power_factors];
    # p_1 = 2 < p_2 = 3 < p_3 = 5 < ...

    SortParallel(prime_power_factors, one_to_k_naturally_ordered);
    # SortParallel(list1, list2) sorts list1 in increasing order
    # and, in parallel, applies to list2 the same exchanges
    # that are applied to list1

    one_to_k_ordered_by_prime_power_factors_of_n_factorial := one_to_k_naturally_ordered;

    Print("\n\n------------------------------------------------------------------");
    Print("\n\nn = ", n);
    Print("\n\n[1,2,..., ", len_prime_power_factors, "] sorted by prime power factors of n!=\n\n", one_to_k_ordered_by_prime_power_factors_of_n_factorial);
od;

Print("\n\n");
\end{lstlisting}

\end{document}